\newcommand{\tvector}[1]{\mathbf{ #1 }} 
\newcommand{\ttensor}[1]{\mathcal{ #1 }} 
\newtheorem{rem}[theorem]{Remark}
\newenvironment{remark}{\begin{rem}\begin{rm}}{\end{rm}\end{rem}}
\newcommand{\TheTitle}{On Convergence 
to Essential~Singularities} 
\newcommand{\TheAuthors}{Nathaniel J.\ McClatchey}
\headers{\TheTitle}{\TheAuthors}
\title{{\TheTitle}\thanks{January 4, 2018
\funding{Supported by the National Science Foundation under Grant 1418787.}}}
\author{
  Nathaniel J.\ McClatchey\thanks{Ohio University, Athens, OH
    (\email{nm160111@ohio.edu})
    }
}
\newacro{als}[ALS]{Alternating Least Squares}
\begin{document}
\maketitle

\begin{abstract}
An iterative optimization method applied to a function \(f\) on \(\mathbb{R}^n\) will produce a sequence of arguments \(\{\tvector{x}_k\}_{k \in \mathbb{N}}\); this sequence is often constrained such that \(\{f(\tvector{x}_k)\}_{k \in \mathbb{N}}\) is monotonic.
As part of the analysis of an iterative method, one may ask under what conditions the sequence \(\{\tvector{x}_k\}_{k \in \mathbb{N}}\) converges.
In 2005, Absil et~al.\ employed the {\L}ojasiewicz gradient inequality in a proof of convergence; this requires that the objective function exist at a cluster point of the sequence.
Here we provide a convergence result that does not require \(f\) to be defined at the limit \(\lim_{k \to \infty} \tvector{x}_k\), should the limit exist.
We show that a variant of the {\L}ojasiewicz gradient inequality holds on sets adjacent to singularities of bounded multivariate rational functions.
We extend the results of Absil et~al.\ to prove that if \(\{\tvector{x}_k\}_{k \in \mathbb{N}} \subset \mathbb{R}^n\) has a cluster point \(\tvector{x}_*\), if \(f\) is a bounded multivariate rational function on \(\mathbb{R}^n\), and if a technical condition holds, then \(\tvector{x}_k \to \tvector{x}_*\) even if \(\tvector{x}_*\) is not in the domain of \(f\).
We demonstrate how this may be employed to analyze divergent sequences by mapping them to projective space, and consider the implications this has for the study of low-rank tensor approximations.
\end{abstract}

\begin{keywords}
	{\L}ojasiewicz gradient inequality, convergence
\end{keywords}

\begin{AMS}
	40A05,	
	90C26	
\end{AMS}

\section{Introduction}	\label{sec:introduction}
Historically, a variety of approaches have been employed to examine the convergence of sequences produced by iterative optimization methods.
Some examine local convergence -- whether small perturbations of a solution are corrected, and at what rate.
Such results are typically proved directly -- as in the case of the Newton-Raphson method -- or by comparison to a method with known behavior \cite{COHEN:1972, USCHMA:2012}.	
Others attempt to show global convergence -- that a particular method will produce a convergent sequence.
Though global convergence can be proved directly in some cases \cite{ES-HA-KH:2015, SH-YA-XI:2015}, more general approaches exist.
Since 1971, the Wolfe conditions \cite{WOLFE:1971} have been employed to ensure that the gradient of the objective function -- evaluated at the points of a sequence -- tends to zero.	
More recently, seminal work by Absil et~al.\ \cite{AB-MA-AN:2005} employed the {\L}ojasiewicz gradient inequality to provide a stronger result.

For convenience, we state the gradient form of Stanis{\l}aw {\L}ojasiewicz's theorem \cite{LOJASI:1959, LOJASI:1993} below, incorporating an improvement from \cite{SCH-USC:2015}.
\begin{theorem}[{\L}ojasiewicz gradient inequality]	\label{thm:loj_ineq}
	Let \(f\) be a real-analytic function on a neighborhood of \(\tvector{x}_*\) in \(\mathbb{R}^n\). Then there are constants \(c > 0\) and \(\theta \in (0, 1/2]\) such that
	\begin{equation}	\label{eqn:loj_ineq}
		|f(\tvector{x}) - f(\tvector{x}_*)|^{1-\theta}	\le	c \|\nabla f(\tvector{x})\|
	\end{equation}
	for any \(\tvector{x}\) in some neighborhood of \(\tvector{x}_*\).
\end{theorem}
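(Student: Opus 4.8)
\emph{Proof idea.} The plan is to reduce to a critical point with value zero and then use resolution of singularities to make \(f\) locally a monomial times a unit, after which the inequality is an elementary exponent count. First I would translate so that \(\tvector{x}_* = \tvector{0}\) and, since neither side of the inequality is affected by adding a constant to \(f\), assume \(f(\tvector{0}) = 0\). Two cases are trivial: if \(f\) is constant near \(\tvector{0}\) then both sides vanish and any \(\theta\) works; if \(\nabla f(\tvector{0}) \neq \tvector{0}\) then \(\|\nabla f\|\) is bounded below near \(\tvector{0}\) while \(|f(\tvector{x}) - f(\tvector{0})| \to 0\), so the inequality holds on a small enough neighborhood with \(\theta = 1/2\). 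This leaves the case \(\nabla f(\tvector{0}) = \tvector{0}\), \(f \not\equiv 0\); since the inequality is immediate wherever \(f = 0\), it suffices to bound \(\|\nabla f\|\) below by a power of \(|f|\) on a punctured neighborhood.

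Next I would invoke resolution of singularities (Hironaka; in the local real-analytic setting, Bierstone--Milman): there is a proper, surjective real-analytic map \(\pi \colon M \to B\) from an \(n\)-manifold onto a ball \(B \ni \tvector{0}\), an isomorphism over \(B \setminus f^{-1}(0)\), such that near each point of \(\pi^{-1}(f^{-1}(0))\) there are local coordinates in which \((f \circ \pi)(u) = u_1^{a_1} \cdots u_n^{a_n}\, v(u)\), with \(v\) a nonvanishing unit and the \(a_i \in \mathbb{Z}_{\ge 0}\) not all zero. In such a chart, differentiating the monomial and estimating --- using the index \(j\) with \(a_j \ge 1\) and \(|u_j|\) smallest, so that \(|u_j| \le |u_1^{a_1}\cdots u_n^{a_n}|^{1/A}\) --- gives \(\|\nabla_u (f \circ \pi)(u)\| \ge c\, |(f \circ \pi)(u)|^{\,1 - 1/A}\) on a small enough chart, where \(A = a_1 + \cdots + a_n\). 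By properness, \(\pi^{-1}(f^{-1}(0) \cap \overline{B'})\) is compact for a closed sub-ball \(\overline{B'}\), so finitely many such charts cover it; letting \(W\) be their union and \(\theta\) the smallest exponent \(1/A\) that occurs --- necessarily \(\le 1/2\), since a chart lying over \(\tvector{0}\) cannot have \(A = 1\) (otherwise \(\nabla_u(f \circ \pi)\) would be nonzero there, whereas it equals \(D\pi^{\top}\, \nabla f(\tvector{0}) = \tvector{0}\)) --- one obtains, after a harmless shrinking, a single pair \((c, \theta)\) for which the estimate holds on \(W\). Transferring downstairs via the chain rule \(\nabla_u (f \circ \pi) = D\pi^{\top}\,(\nabla f \circ \pi)\) and the boundedness of \(\|D\pi\|\) on compacta yields \(\|\nabla f(\pi(u))\| \ge c'\, |f(\pi(u))|^{1 - \theta}\) on \(W\); and since \(\pi\) is proper and surjective, \(\pi^{-1}(\overline{B_\varepsilon}) \subseteq W\) for \(\varepsilon\) small, with every point of \(B_\varepsilon\) having a preimage in \(W\). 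This gives the inequality on \(B_\varepsilon\) (the zero set of \(f\) being handled trivially, which incidentally re-proves that \(f\) has no critical value other than \(f(\tvector{x}_*)\) near \(\tvector{x}_*\)); the bound \(\theta \le 1/2\) is the refinement recorded in \cite{SCH-USC:2015}, and in any case a valid exponent can always be lowered, since once the inequality holds for some \(\theta_0\) on a neighborhood where \(|f - f(\tvector{x}_*)| \le 1\) it holds for every smaller exponent.

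The main obstacle is the appeal to resolution of singularities: this is the one genuinely deep ingredient, and a tool of comparable strength is unavoidable here --- the classical {\L}ojasiewicz separation inequality, applied to the pair \(f - f(\tvector{x}_*)\) and \(\|\nabla f\|\), only produces a \emph{finite} exponent, whereas the entire content of the theorem is that the exponent is \emph{strictly less than one}. A secondary, more clerical obstacle is the patching step --- assembling the chartwise monomial estimates into a bound on an honest neighborhood of \(\tvector{x}_*\) --- for which the properness of \(\pi\) is exactly what makes the finiteness and covering arguments go through. An alternative route that avoids resolution is {\L}ojasiewicz's original semianalytic argument \cite{LOJASI:1959}: restrict to real-analytic arcs through \(\tvector{x}_*\) supplied by the curve selection lemma, expand \(f\) along each arc in a Puiseux series to obtain the inequality there, and argue by contradiction; the delicate point in that route is producing a single exponent \(\theta\) that is valid simultaneously along all of the relevant arcs.
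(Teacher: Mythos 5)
The paper does not prove this statement: Theorem~\ref{thm:loj_ineq} is quoted as classical background, with the inequality itself attributed to {\L}ojasiewicz \cite{LOJASI:1959, LOJASI:1993} and the normalization \(\theta \in (0,1/2]\) taken from \cite{SCH-USC:2015}, so there is no in-paper argument to compare yours against. On its own terms, your sketch is the standard modern proof by desingularization (the route taken, e.g., in Bierstone--Milman's expository treatments), and its outline is sound: the reduction to \(f(\tvector{x}_*)=0\) with \(\nabla f(\tvector{x}_*)=\tvector{0}\), the chartwise monomial estimate with the exponent \(1-1/A\) via the smallest coordinate \(|u_j|\le |u^a|^{1/A}\), the use of properness to get a finite subcover and to force \(\pi^{-1}(\overline{B_\varepsilon})\) into the union \(W\) for small \(\varepsilon\), and --- crucially --- the chain-rule transfer in the correct direction, \(\|\nabla_u(f\circ\pi)\|\le\|D\pi\|\,\|\nabla f\circ\pi\|\), so that degeneracy of \(D\pi\) on the exceptional set does no harm. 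Your exponent bookkeeping (charts over \(\tvector{x}_*\) have \(A\ge 2\), hence \(\min_i 1/A_i\le 1/2\), and a valid exponent may be lowered once \(|f-f(\tvector{x}_*)|\le 1\)) is also correct. Two points would need explicit care in a full write-up: the chartwise bound only holds after shrinking each chart so that \(|u^a|\) is small enough to absorb the \(u^a\,\partial_j v\) term, and this shrinking must be performed \emph{before} extracting the finite subcover of the compact set \(\pi^{-1}(f^{-1}(0)\cap\overline{B'})\); and the precise form of resolution invoked should be principalization/monomialization of the ideal \((f)\) with \(\pi\) proper and an isomorphism off a nowhere-dense set, which is what guarantees surjectivity onto a full ball. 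Your closing remarks are apt: some tool of this depth (or {\L}ojasiewicz's original curve-selection/Puiseux argument, where the delicacy is a single uniform exponent) is genuinely needed, since the whole force of \eqref{eqn:loj_ineq} is that the exponent \(1-\theta\) is strictly less than one.
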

Though the constant \(\theta\) can be determined in some cases \cite{BRZOST:2015} and estimated in others \cite{GWOZDZ:1999, OLEKSI:2013}, it is in general a~priori unknown.

The result of Absil et~al.\ \cite{AB-MA-AN:2005} states that a sequence \(\{\tvector{x}_{k}\}_{k \in \mathbb{N}} \subset \mathbb{R}^n\) converges if it has a cluster point and if there is some real-analytic function \(f : \mathbb{R}^n \to \mathbb{R}\) such that the ``descent conditions'' (\ref{eqn:loj_assumption_1}--\ref{eqn:loj_assumption_2}) hold. 
The first descent condition, that
\begin{equation}
	f(\tvector{x}_k) - f(\tvector{x}_{k+1})	\,\ge\,	\sigma\, \|\nabla f(\tvector{x}_k)\| \, \|\tvector{x}_{k+1} - \tvector{x}_{k}\|
    \tag{A1}	\label{eqn:loj_assumption_1}
\end{equation}
for some \( \sigma > 0 \) and all sufficiently large \( k \), ensures that the sequence of \(f\)-values decreases sufficiently quickly.
The second descent condition prevents cycles by requiring that
\begin{equation}
	f(\tvector{x}_k) = f(\tvector{x}_{k+1}) \quad\implies\quad \tvector{x}_k = \tvector{x}_{k+1}	\,.
    \tag{A2}	\label{eqn:loj_assumption_2}
\end{equation}
Moreover, Uschmajew, et~al.\ proved that if also there~exists some \( \kappa > 0 \) such that for all sufficiently large \( k \),	
\begin{equation}
	\|\tvector{x}_{k+1} - \tvector{x}_{k}\| \ge \kappa \| \nabla f(\tvector{x}_k)\| \,,
    \tag{A3}	\label{eqn:loj_assumption_3}
\end{equation}
then the rate of convergence of the sequence may be bounded \cite{SCH-USC:2015, USCHMA:2015}.

In recent years, the results of Absil, Uschmajew, et~al.\  \cite{AB-MA-AN:2005, SCH-USC:2015, USCHMA:2015} have been of interest in the field of tensor approximation.
Given a target tensor \(\ttensor{T}\), one may attempt to find the tensor of a fixed rank \(r\) which best approximates the target.
This problem is often described as minimization of the ``error'' \(\|\ttensor{T} - \tau(\tvector{x})\|\), where \(\tvector{x} \in \mathbb{R}^n\) is a tuple of parameters, and \(\tau\) is a multilinear map from parameters to tensors of rank \(r\).
If \(r = 1\), there exist parameters \(\tvector{x}\) that minimize error, and several methods, including \ac{als}, are known to produce convergent sequences \cite{USCHMA:2015, LI-US-ZH:2015}.
If \(r \ge 2\), then the approximation \( \tau(\tvector{x}) \) is a sum of two or more separable tensors \(\ttensor{X}^1 + \dots + \ttensor{X}^r\).
Because no orthogonality constraint is imposed on \(\ttensor{X}^1, \dots, \ttensor{X}^r\), there exist problems for which error cannot be minimized \cite{BEY-MOH:2005, V-N-V-M:2014}.
For these ill-posed problems, if \(\|\ttensor{T} - \tau(\tvector{x}_k)\|\) is to approach its infimum, the sequence of parameters \(\{\tvector{x}_k\}_{k \in \mathbb{N}}\) must diverge.
As yet, little intuition exists for the behavior of these sequences; in particular, it is not known whether the summands \(\ttensor{X}^1, \dots, \ttensor{X}^r\) maintain some steady configuration, changing little except in scale, or whether they cycle among different configurations.

To gain intuition, the divergent case may be converted to a projective space by rescaling; one may then ask whether the normalized sequence \(\{\tvector{x}_k / \|\tvector{x}_k\|\}_{k \in \mathbb{N}}\) converges.
When this re-scaling is performed, however, one must alter the objective function such that it is optimized by a unit vector, as in \(\tvector{x} \mapsto \min_{\lambda \in \mathbb{R}} f(\lambda \tvector{x})\).
In the tensor approximation problem, this results in a bounded multivariate rational function.
Targets for which the best approximation problem is ill-posed correspond to singularities of this function; some of these singularities are known to be essential --- that is, neither removable nor unbounded.

\begin{figure}[ht]
	\centering
	\begin{minipage}{.45\textwidth}
	\resizebox{.97\linewidth}{!}{
		\begin{tikzpicture}
  \begin{axis}[
      hide axis,
      view={-30}{40},
      xlabel = $x$,
      ylabel = $y$
    ]
    

    \addplot3 [
      surf,
      domain=0:360,	y domain=0:2,
      samples=65,	samples y=12,
      line join=round,
      shader=faceted,
      variable=\t,
      z buffer = sort
    ]
    ( {cos(t)*y / max(abs(cos(t)), abs(sin(t)))},{sin(t)*y / max(abs(cos(t)), abs(sin(t)))},{-sin(2 * t) / (2 + 2 * (y / max(abs(cos(t)), abs(sin(t))))^2)} );
   
   \addplot3+[scatter,point meta = -0.5,color=black] table {figure_rational_pinch.dat};
   
   \draw	(rel axis cs:0,1,0.5)	--	(rel axis cs:0,0,0.5)	--	(rel axis cs:1,0,0.5);	
  \end{axis}
\end{tikzpicture}}
	\end{minipage}
	\begin{minipage}{.45\textwidth}
	\resizebox{.97\linewidth}{!}{
		\begin{tikzpicture}
  \begin{axis}[
      hide axis,
      view={20}{40},
      xlabel = $x$,
      ylabel = $y$
    ]
    

    \addplot3 [
      surf,
      domain=0:360,	y domain=0:2,
      samples=65,	samples y=12,
      line join=round,
      shader=faceted,
      variable=\t,
      z buffer = sort
    ]
    ( {cos(t)*y / max(abs(cos(t)), abs(sin(t)))},{sin(t)*y / max(abs(cos(t)), abs(sin(t)))},{-sin(2 * t) / (2 + 2 * (y / max(abs(cos(t)), abs(sin(t))))^2)} );
   
   \addplot3+[scatter,point meta = -0.5,color=black] table {figure_rational_pinch.dat};
   
   \draw	(rel axis cs:0,0,0.5)	--	(rel axis cs:1,0,0.5)	--	(rel axis cs:1,1,0.5);
  \end{axis}
\end{tikzpicture}}
	\end{minipage}
	\caption{Line Search along the gradient maximizes \(	f(x,y)	=	\frac{-xy}{(x^2 + y^2)(1 + x^2 + y^2)}	\) from an initial estimate of \( (x_0, y_0) = (2, -0.1) \).}
	\label{fig:rational_gradient_descent}
\end{figure}
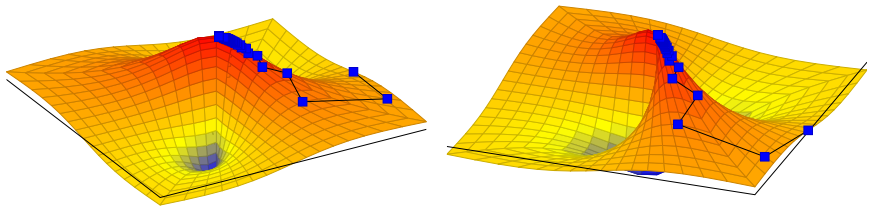
Wherever a function fails to be continuous, the {\L}ojasiewicz gradient inequality cannot apply.
This does not prevent optimization methods from producing convergent sequences, as is illustrated in \cref{fig:rational_gradient_descent}, but it does preclude the use of the theorems of Absil, Uschmajew, et~al.
Fortunately, those theorems may be strengthened to require only that the {\L}ojasiewicz gradient inequality hold on points in the sequence.

The conditions required by the following theorems will follow from the main results of this paper.
Specifically, \cref{thm:Lojasiewicz_cones,thm:Lojasiewicz_on_sequence} will establish \eqref{eqn:loj_on_sequence} for sequences such as that illustrated in \cref{fig:rational_gradient_descent}.
\begin{theorem}	\label{thm:loj_converge_modified}
	Let \(U \subset \mathbb{R}^n\) be an open set, let \( f : U \to \mathbb{R} \) be a differentiable function, and let \( \{ \tvector{x}_k\}_{k=1}^{\infty} \subset U \) be a sequence of vectors satisfying Assumptions~\eqref{eqn:loj_assumption_1} and~\eqref{eqn:loj_assumption_2}.
	If a cluster point $\tvector{x}^*$ of the sequence $\{ \tvector{x}_k\}_{k=1}^{\infty}$ admits an open neighborhood ${V \subset \mathbb{R}^n}$ and constants ${\theta \in (0, 1/2]}$ and ${c \in \mathbb{R}}$ such that for~all~$k$,
	\begin{equation}	\label{eqn:loj_on_sequence}
		\tvector{x}_k \in V	\quad \implies \quad	| f(\tvector{x}_k) - \lim_{j \to \infty} f(\tvector{x}_j) |^{1-\theta} \le c \|\nabla f(\tvector{x}_k)\|	\tag{\texorpdfstring{{\text{\L}}\textsubscript{*}}{L*}}
	\end{equation}
	then \( \tvector{x}^* \) must be the limit of the sequence  \( \{ \tvector{x}_k\}_{k=1}^{\infty} \).
\end{theorem}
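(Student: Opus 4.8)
The plan is to adapt the now-standard argument of Absil et~al.\ \cite{AB-MA-AN:2005}, making two changes: the limiting value $\lim_{j} f(\tvector{x}_j)$ plays the role of $f(\tvector{x}^*)$ (which may not exist, since $\tvector{x}^*$ need not lie in $U$), and the {\L}ojasiewicz estimate is invoked only at the points $\tvector{x}_k$ of the sequence. I would first record preliminaries. By \eqref{eqn:loj_assumption_1}, $f(\tvector{x}_k) - f(\tvector{x}_{k+1}) \ge \sigma\|\nabla f(\tvector{x}_k)\|\,\|\tvector{x}_{k+1}-\tvector{x}_k\| \ge 0$, so $\{f(\tvector{x}_k)\}$ is non-increasing; hence the limit $f_* := \lim_{j\to\infty} f(\tvector{x}_j)$ appearing in \eqref{eqn:loj_on_sequence} is a well-defined real number with $f(\tvector{x}_k) \ge f_*$ for every $k$. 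Replacing $f$ by $f - f_*$ changes neither $\nabla f$ nor the assumptions, so I may assume $f_* = 0$ and $f(\tvector{x}_k) \ge 0$.

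Next I would dispose of a degenerate case. If $f(\tvector{x}_k) = 0$ for some $k$ with $\tvector{x}_k \in V$, then monotonicity forces $f(\tvector{x}_j) = 0$ for all $j \ge k$, so $f(\tvector{x}_j) = f(\tvector{x}_{j+1})$ and \eqref{eqn:loj_assumption_2} gives $\tvector{x}_j = \tvector{x}_{j+1}$ for all $j \ge k$; the sequence is eventually constant, and, being clustered at $\tvector{x}^*$, it converges to $\tvector{x}^*$. So from now on $f(\tvector{x}_k) > 0$ whenever $\tvector{x}_k \in V$; since $\tvector{x}^*$ is a cluster point at least one such $k$ exists, and \eqref{eqn:loj_on_sequence} then forces $c > 0$ and $\nabla f(\tvector{x}_k) \neq 0$ for every $k$ with $\tvector{x}_k \in V$.

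The heart of the proof is a telescoping bound on path length. Fix any $k$ with $\tvector{x}_k \in V$. Dividing \eqref{eqn:loj_assumption_1} by $\sigma\|\nabla f(\tvector{x}_k)\|$ and using \eqref{eqn:loj_on_sequence} (with $f_* = 0$) to bound $\|\nabla f(\tvector{x}_k)\|^{-1}$ by $c\, f(\tvector{x}_k)^{\theta - 1}$, then applying the elementary concavity estimate $a^{\theta - 1}(a - b) \le \theta^{-1}(a^\theta - b^\theta)$, valid for $0 \le b \le a$ and $\theta \in (0,1)$, I obtain
\[
  \|\tvector{x}_{k+1} - \tvector{x}_k\| \;\le\; C\bigl(f(\tvector{x}_k)^\theta - f(\tvector{x}_{k+1})^\theta\bigr), \qquad C := \frac{c}{\sigma\theta},
\]
for every index $k$ with $\tvector{x}_k \in V$. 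The crucial feature is that this bound constrains $\tvector{x}_k$ only, not $\tvector{x}_{k+1}$.

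Finally I would run the "capture" argument. Choose $\rho > 0$ with $\overline{B}(\tvector{x}^*,\rho) \subset V$. Because $t \mapsto C t^\theta$ is continuous at $0$, $f(\tvector{x}_k) \to 0$, and some subsequence of $\{\tvector{x}_k\}$ converges to $\tvector{x}^*$, I can pick an index $k_0$ with $\|\tvector{x}_{k_0} - \tvector{x}^*\| + C f(\tvector{x}_{k_0})^\theta < \rho$. Induction on $k \ge k_0$ then shows $\tvector{x}_k \in \overline{B}(\tvector{x}^*,\rho)$: granting $\tvector{x}_{k_0},\dots,\tvector{x}_k \in \overline{B}(\tvector{x}^*,\rho) \subset V$, the displayed estimate holds at every index in $\{k_0,\dots,k\}$ and telescopes, giving $\sum_{j=k_0}^{k}\|\tvector{x}_{j+1}-\tvector{x}_j\| \le C f(\tvector{x}_{k_0})^\theta$ and hence $\|\tvector{x}_{k+1} - \tvector{x}^*\| \le \|\tvector{x}_{k_0}-\tvector{x}^*\| + C f(\tvector{x}_{k_0})^\theta < \rho$. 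Thus $\tvector{x}_k \in V$ for all $k \ge k_0$, so the same telescoping sum is bounded uniformly in $k$, whence $\sum_{k}\|\tvector{x}_{k+1}-\tvector{x}_k\| < \infty$; the sequence is therefore Cauchy, it converges, and its limit --- having $\tvector{x}^*$ as a cluster point --- is $\tvector{x}^*$. I expect this capture step to be the main obstacle, since one may not assume the iterates eventually lie in $V$ (only that they cluster at $\tvector{x}^*$): the induction must be arranged so that the very estimate controlling total path length simultaneously confines the iterates to $V$. A secondary subtlety, absent from \cite{AB-MA-AN:2005}, is that continuity of $f$ at $\tvector{x}^*$ is unavailable, which is precisely why monotonicity from \eqref{eqn:loj_assumption_1}, rather than continuity, is used to produce the limiting value $f_*$.
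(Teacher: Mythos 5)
Your proposal is correct and takes essentially the same route as the paper's proof, which is precisely Uschmajew's telescoping-plus-capture argument with \( f(\tvector{x}^*) \) replaced by \( \lim_{j\to\infty} f(\tvector{x}_j) \) and the {\L}ojasiewicz bound invoked only at the iterates \( \tvector{x}_k \). One small touch-up: monotonicity alone does not rule out \( \lim_{j\to\infty} f(\tvector{x}_j) = -\infty \); as in the paper, finiteness follows because infinitely many \( \tvector{x}_k \) lie in \( V \) (since \( \tvector{x}^* \) is a cluster point) and \eqref{eqn:loj_on_sequence} at such points forces \( \left| f(\tvector{x}_k) - \lim_{j\to\infty} f(\tvector{x}_j) \right| \) to be finite, the gradient being finite wherever \( f \) is differentiable.
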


\begin{theorem}	\label{thm:loj_linear_modified}
	Under the conditions of \cref{thm:loj_converge_modified}, if also Assumption~\eqref{eqn:loj_assumption_3} holds, then ${\nabla f(\tvector{x}_k) \to \tvector{0}}$ and
	\begin{equation*}
		\|\tvector{x}^* - \tvector{x}_k\|	=
			\left\{\begin{matrix}
				O(q^k) & \quad \text{ if } & \theta = \frac12 &\text{ (for some } 0 < q < 1 \text{),}\\
				O(k^{\frac{-\theta}{1 - 2\theta}}) & \quad \text{ if } & 0 < \theta < \frac12&
			\end{matrix}\right.
	\end{equation*}
	where \( \theta \) is such that \eqref{eqn:loj_on_sequence} holds.
\end{theorem}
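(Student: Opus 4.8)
The plan is to reduce everything to elementary one-step recursions for the ``energy gap'' $\Delta_k := f(\tvector{x}_k) - f_\ast$, where $f_\ast := \lim_{j\to\infty} f(\tvector{x}_j)$. First I would record the routine preliminaries: Assumption~\eqref{eqn:loj_assumption_1} forces its right-hand side $\sigma\|\nabla f(\tvector{x}_k)\|\,\|\tvector{x}_{k+1}-\tvector{x}_k\|$ to be nonnegative, so $\{f(\tvector{x}_k)\}$ is nonincreasing, $f_\ast$ exists, and $\Delta_k \ge 0$ with $\Delta_k \to 0$. By \cref{thm:loj_converge_modified} the sequence converges to $\tvector{x}^\ast$, so there is an index $K$ with $\tvector{x}_k \in V$ for every $k \ge K$. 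If $\Delta_k = 0$ for some $k \ge K$ then $f(\tvector{x}_k)=f(\tvector{x}_{k+1})$, and Assumption~\eqref{eqn:loj_assumption_2} makes the sequence eventually constant and equal to $\tvector{x}^\ast$, so both rate estimates hold trivially; hence I may assume $\Delta_k > 0$ for all $k \ge K$. Combining \eqref{eqn:loj_assumption_1} with \eqref{eqn:loj_assumption_3} gives $\Delta_k - \Delta_{k+1} \ge \sigma\kappa\,\|\nabla f(\tvector{x}_k)\|^2$ for $k \ge K$; summing this telescoping bound shows $\sum_k \|\nabla f(\tvector{x}_k)\|^2 < \infty$, so $\nabla f(\tvector{x}_k) \to \tvector{0}$, which is the first assertion.

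Next I would establish the path-length estimate. Since $t \mapsto t^{\theta}$ is concave on $[0,\infty)$, the tangent-line inequality gives $\Delta_k^{\theta} - \Delta_{k+1}^{\theta} \ge \theta\,\Delta_k^{\theta-1}(\Delta_k - \Delta_{k+1})$. Bounding $\Delta_k - \Delta_{k+1}$ below by \eqref{eqn:loj_assumption_1}, and then $\Delta_k^{\theta-1}\|\nabla f(\tvector{x}_k)\|$ below by $1/c$ using \eqref{eqn:loj_on_sequence} (here $|f(\tvector{x}_k)-f_\ast| = \Delta_k$), I obtain the key telescoping bound
\[
  \Delta_k^{\theta} - \Delta_{k+1}^{\theta} \;\ge\; \frac{\theta\sigma}{c}\,\|\tvector{x}_{k+1} - \tvector{x}_k\| \qquad (k \ge K).
\]
Summing from $N \ge K$ onward, letting the upper limit tend to infinity, and using $\|\tvector{x}^\ast - \tvector{x}_N\| \le \sum_{k \ge N}\|\tvector{x}_{k+1}-\tvector{x}_k\|$, this yields $\|\tvector{x}^\ast - \tvector{x}_N\| \le \tfrac{c}{\theta\sigma}\,\Delta_N^{\theta}$, so it remains only to bound $\Delta_N^{\theta}$.

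For the decay of $\Delta_N$, feed \eqref{eqn:loj_on_sequence} into the inequality $\Delta_k - \Delta_{k+1} \ge \sigma\kappa\|\nabla f(\tvector{x}_k)\|^2$ to get $\Delta_{k+1} \le \Delta_k - \tfrac{\sigma\kappa}{c^2}\,\Delta_k^{\,2-2\theta}$ for $k \ge K$. If $\theta = \tfrac12$ this reads $\Delta_{k+1} \le \bigl(1 - \tfrac{\sigma\kappa}{c^2}\bigr)\Delta_k$ (and the coefficient must lie in $(0,1)$, since otherwise $\Delta_{k+1}\le 0$ and we are back in the trivial case), so $\Delta_N = O(q_0^{\,N})$ for some $q_0 \in (0,1)$, and the path-length estimate gives $\|\tvector{x}^\ast - \tvector{x}_N\| = O(q_0^{\,N/2}) = O(q^N)$ with $q := \sqrt{q_0}$. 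If $0 < \theta < \tfrac12$, I would apply the standard discrete comparison argument to $\Delta_k^{\,2\theta-1}$: convexity of $t \mapsto t^{\,2\theta-1}$ on $(0,\infty)$ together with the recursion gives $\Delta_{k+1}^{\,2\theta-1} - \Delta_k^{\,2\theta-1} \ge (1-2\theta)\tfrac{\sigma\kappa}{c^2}$, hence $\Delta_N^{\,2\theta-1}$ grows at least linearly in $N$ and $\Delta_N = O\bigl(N^{-1/(1-2\theta)}\bigr)$; then $\|\tvector{x}^\ast - \tvector{x}_N\| \le \tfrac{c}{\theta\sigma}\Delta_N^{\theta} = O\bigl(N^{-\theta/(1-2\theta)}\bigr)$, as claimed.

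I expect the only step needing genuine care to be the telescoping estimate of the second paragraph: correctly chaining the two one-step descent conditions with the Łojasiewicz-type inequality \eqref{eqn:loj_on_sequence} so that the exponents $\theta-1$ and $1-\theta$ cancel, which in turn requires $\Delta_k$ to be strictly positive — handled by isolating the eventually-constant case via \eqref{eqn:loj_assumption_2}. The remaining ingredients — the concavity/convexity tangent-line inequalities and the well-known discrete Gronwall-type lemma governing the $0<\theta<\tfrac12$ case — are standard bookkeeping.
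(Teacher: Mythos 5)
Your proposal is correct, but it reaches the rate estimates by a genuinely different route than the paper. The paper's appendix deliberately makes only minimal changes to Uschmajew's published proof: after the same telescoping estimate (your $\|\tvector{x}^*-\tvector{x}_N\|\le\frac{c}{\theta\sigma}\Delta_N^{\theta}$, obtained there via the integral bound rather than the tangent-line inequality), it introduces the tail path-lengths $r_k=\sum_{j\ge k}\|\tvector{x}_{j+1}-\tvector{x}_j\|$, derives the one-step recursion $r_{k+1}\le r_k-\nu r_k^{(1-\theta)/\theta}$ from that estimate together with \eqref{eqn:loj_on_sequence} and \eqref{eqn:loj_assumption_3}, and then invokes Levitt's explicit comparison sequence $s_k=Ck^{-p}$ with $p=\frac{\theta}{1-2\theta}$. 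You instead run the recursion directly on the function-value gaps: chaining \eqref{eqn:loj_assumption_1}, \eqref{eqn:loj_assumption_3} and \eqref{eqn:loj_on_sequence} gives $\Delta_{k+1}\le\Delta_k-\frac{\sigma\kappa}{c^2}\Delta_k^{2-2\theta}$, convexity of $t\mapsto t^{2\theta-1}$ converts this into linear growth of $\Delta_k^{2\theta-1}$ (or geometric decay when $\theta=\frac12$), and the path-length bound transfers the $\Delta$-decay to $\|\tvector{x}^*-\tvector{x}_k\|$; the exponents match the statement. Your argument that $\nabla f(\tvector{x}_k)\to\tvector{0}$ via summability of $\|\nabla f(\tvector{x}_k)\|^2$ is also different from, and slightly heavier than, the paper's one-liner: convergence of $\{\tvector{x}_k\}$ forces $\|\tvector{x}_{k+1}-\tvector{x}_k\|\to 0$, and \eqref{eqn:loj_assumption_3} then gives the claim directly. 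Both routes are sound. The paper's choice buys a minimal-diff extension of an existing proof (the stated purpose of its appendix), while yours is self-contained, avoids importing Levitt's comparison lemma, and correctly isolates the two points that genuinely need care: the eventually-constant case handled through \eqref{eqn:loj_assumption_2}, and the fact that \eqref{eqn:loj_on_sequence} need only hold at the iterates $\tvector{x}_k\in V$, which is supplied by the convergence conclusion of \cref{thm:loj_converge_modified}.
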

The following short proof lists only those changes required to extend Uschmajew's result.
For a full proof of \cref{thm:loj_converge_modified,thm:loj_linear_modified}, see~\cref{apx:full_proof_lojasiewicz}.
\begin{proof}[Proof of \Cref{thm:loj_converge_modified,thm:loj_linear_modified}]
Though \(f(\tvector{x}^*)\) is not assumed to exist, \eqref{eqn:loj_on_sequence} implies that \( \lim_{k \to \infty} f(\tvector{x}_k) \) exists and is finite.
With this established, only three changes are needed to extend the proof in \cite[p.~644]{SCH-USC:2015} to a proof of \Cref{thm:loj_converge_modified,thm:loj_linear_modified}:
\begin{enumerate}
\item Replacing all occurences of \( f(\tvector{x}^*) \) with \( \lim_{k \to \infty} f(\tvector{x}_k) \) we remove the requirement that \( \tvector{x}^* \) be in the domain of \( f \).
\item Inequality (A.1) in \cite{SCH-USC:2015} need only hold when $\tvector{x} = \tvector{x}_k$. Thus, it follows from \eqref{eqn:loj_on_sequence}.
\item The statement that \(n\) may be selected so large that \( \| \tvector{x}_n - \tvector{x}^* \| < \frac{\epsilon}{2} \) and \( \frac{\Lambda}{\sigma \theta} f_n^{\theta} < \frac{\epsilon}{2} \) does not require that \( f \) be continuous at \( \tvector{x}^* \).
Instead, it follows from the existence of~\( \lim_{k \to \infty} f(\tvector{x}_k) \) and the assumption that \( \tvector{x}^* \) is a cluster point.
\end{enumerate}
\end{proof}

\begin{remark}
\Cref{thm:loj_converge_modified,thm:loj_linear_modified} are strictly stronger than the results of Absil, Uschmajew, et~al.\ in that weaker hypotheses allow the same conclusions.
Specifically, if \( f \) is analytic on a neighborhood of \(\tvector{x}^*\), \eqref{eqn:loj_on_sequence} follows from \cref{thm:loj_ineq}.
\end{remark}

Our \cref{thm:loj_converge_modified,thm:loj_linear_modified} do not require continuity of the objective function, but do require a weaker form \eqref{eqn:loj_on_sequence} of the {\L}ojasiewicz gradient inequality.
Beyond \cref{thm:loj_converge_modified,thm:loj_linear_modified}, the contributions of this paper are four-fold.
\begin{itemize}
	\item	In \cref{sec:loj_coneineq}, we contribute a version of the {\L}ojasiewicz gradient inequality that may be employed at essential singularities of bounded multivariate rational functions.
		This inequality holds not on a neighborhood of a singularity, but instead on open sets which have boundaries containing the singularity.
	\item	\Cref{sec:loj_seq_tail} establishes that our {\L}ojasiewicz gradient inequality holds on the tail of the sequence, under a technical condition.
		Specifically, if the limiting behavior of \(f\) near the singularity does not depend continuously on the direction of approach, then the sequence must avoid the ``unsafe'' directions at which discontinuities occur.
		We show that the set of ``unsafe'' directions of approach is closed and has Lebesgue measure zero.
	\item	\Cref{sec:convergence_result} combines these results to prove that if this and (\ref{eqn:loj_assumption_1}--\ref{eqn:loj_assumption_3}) hold, then the sequence \(\{\tvector{x}_{k+1}\}_{k \in \mathbb{N}}\) converges.
	\item	Finally, \cref{sec:application} provides additional tools for examination of convergence in direction and briefly discusses the implications for tensor approximation.
\end{itemize}
This partially closes a gap in the theory of tensor approximation, and provides a general tool for analysis of sequences.


\section[On the assumption of a {\L}ojasiewicz inequality]{On the assumption (\texorpdfstring{\lowercase{\ref{eqn:loj_on_sequence}}}{L*})}	\label{sec:loj_sequence}
Before we begin our analysis of \eqref{eqn:loj_on_sequence}, we note two obstacles and the means by which we circumvent them.

First, we must assume that \(f\) has some structure strict enough that its behavior may be analyzed near a singularity \(\tvector{x}^*\), yet not so strict as to require that \(f\) be continuous at \(\tvector{x}^*\).
Throughout the paper, we will consider multivariate rational functions on~\(\mathbb{R}^n\), with the assumption that the domain of such functions is defined implicitly to be all points in~\(\mathbb{R}^n\) at which division by zero does not occur.
More formally, we define a multivariate rational function as follows:
\begin{definition}
A function \(r\) is a multivariate rational function on \(\mathbb{R}^n\) if and only if there exist multivariate polynomials \(p, q : \mathbb{R}^n \to \mathbb{R}\) such~that
\(r = \frac{p}{q}\).
\end{definition}

Second, we cannot expect the {\L}ojasiewicz gradient inequality to hold on open neighborhoods of \(\tvector{x}^*\).
As part our analysis of \eqref{eqn:loj_on_sequence}, we find sets on which
\begin{equation}	\label{eqn:Lojasiewicz_generalized}
	|f(\tvector{x}) - K|^{1-\theta} \le c\|\nabla f(\tvector{x})\|
\end{equation}
holds.
If \(\tvector{x}^*\) were a removable singularity of \(f\), one might be able to establish \eqref{eqn:Lojasiewicz_generalized} on an open neighborhood of \(\tvector{x}^*\).
For an essential singularity, however, \eqref{eqn:Lojasiewicz_generalized} can fail to hold on open neighborhoods of \(\tvector{x}^*\).

To illustrate this, consider the function defined by \((x,y) \overset{f}{\longmapsto} \frac{y^2 - x^2}{x^2 + y^2}\) and the sequence \(\tvector{e}_1, \frac{1}{2} \tvector{e}_2, \frac{1}{4} \tvector{e}_1, \frac{1}{8} \tvector{e}_2, \dots \subset \mathbb{R}^2\).
Clearly, \(\tvector{x}_k \to \tvector{0}\).
Moreover, for~all \(k \in \mathbb{N}\), we have \(\nabla f(\tvector{x}_k) = \tvector{0}\) and \(f(\tvector{x}_k) = (-1)^k\).
If \eqref{eqn:Lojasiewicz_generalized} were to hold on any open neighborhood of \(\tvector{0}\), it would imply that \(f(\tvector{x}_k) = K\) for~all sufficiently large \(k\), which contradicts our calculation.

Rather than examine open neighborhoods of singularities, we establish that \eqref{eqn:Lojasiewicz_generalized} holds on certain open sets adjacent to singularities of bounded multivariate rational functions.
We then provide conditions under which sequences \(\{\tvector{x}_k\}_{k \in \mathbb{N}}\) remain within those open sets.

\subsection{A {\L}ojasiewicz-like inequality holds on cones}	\label{sec:loj_coneineq}
Bounded multivariate rational functions may admit essential singularities, such as that illustrated in \cref{fig:rational_gradient_descent}.
Bounded univariate rational functions instead exhibit only removable singularities.
In this \namecref{sec:loj_coneineq}, we parameterize multivariate functions in terms of directions and distance; that is, as images of lines under a multivariate function.
\Cref{thm:rational_taylor,thm:convergent_taylor} establish that these parameterizations are themselves analytic, if not rational.
The parameterization is employed in \Cref{thm:Lojasiewicz_cones} to establish a {\L}ojasiewicz inequality on cones near singularities of the original function.

\begin{lemma}	\label{thm:rational_taylor}
Suppose $f$ is a multivariate rational function on \({\mathbb{R}^m \times \mathbb{R}}\).
If
	\[	f_{\tvector{x}}(t) = \lim_{s \to t} f(\tvector{x}, s)	\]
is defined for some ${\tvector{x} \in \mathbb{R}^m}$ and all ${t \in \mathbb{R}}$, then there exists an open set ${\mathcal{O} \subset \mathbb{R}^m}$ of full measure and multivariate rational functions $c_n : \mathcal{O} \to \mathbb{R}$ defined everywhere on $\mathcal{O}$ such that, for every $\tvector{x} \in \mathcal{O}$ there exists $\rho_{\tvector{x}} > 0$ such that
	\[	|t| < \rho_{\tvector{x}}	\quad\implies\quad	f_{\tvector{x}}(t) = \sum_{n=0}^{\infty} c_n(\tvector{x}) t^n	\,.	\]
\end{lemma}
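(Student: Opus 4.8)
The plan is to write $f = p/q$ with $p, q$ polynomials on $\mathbb{R}^m \times \mathbb{R}$, and to analyze the one-variable restrictions $t \mapsto f(\tvector{x}, t)$ as functions of the parameter $\tvector{x}$. First I would treat the generic case: for fixed $\tvector{x}$, write $p(\tvector{x}, t) = \sum_i a_i(\tvector{x}) t^i$ and $q(\tvector{x}, t) = \sum_j b_j(\tvector{x}) t^j$, where the $a_i, b_j$ are polynomials in $\tvector{x}$ alone. Let $D \le \deg_t q$ be the largest index with $b_D \not\equiv 0$; then $\mathcal{O}_0 := \{\tvector{x} : b_D(\tvector{x}) \ne 0\}$ is open and of full measure (the complement is a proper algebraic set), and on $\mathcal{O}_0$ the polynomial $q(\tvector{x}, \cdot)$ has exactly $D$ roots in $t$, counted with multiplicity in $\mathbb{C}$. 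The key structural observation is that the existence of $f_{\tvector{x}}(t) = \lim_{s\to t} f(\tvector{x},s)$ for \emph{all} real $t$ forces every real root of $q(\tvector{x},\cdot)$ to be a removable singularity, i.e. a root of $p(\tvector{x},\cdot)$ of at least equal multiplicity; this is exactly the univariate fact alluded to in the text (a bounded univariate rational function has only removable singularities). So for $\tvector{x}$ in the (full-measure, but possibly not open) set $E$ where this limit exists, $f_{\tvector{x}}$ extends to a function that is analytic on a neighborhood of the real line — in particular analytic at $0$ — hence has a convergent Taylor series $\sum c_n(\tvector{x}) t^n$ on some disk of radius $\rho_{\tvector{x}} > 0$.

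The substance of the lemma is the claim that the Taylor coefficients $c_n$ are themselves multivariate rational functions defined \emph{everywhere} on a single open full-measure set $\mathcal{O}$, uniformly in $n$. Here is where I would do the real work. After dividing out the common factor responsible for removable singularities at $t = 0$ (and checking this can be done rationally in $\tvector{x}$ on a suitable open set), reduce to the case $b_0(\tvector{x}) \ne 0$, so that $f_{\tvector{x}}(t) = p(\tvector{x},t)/q(\tvector{x},t)$ with $q(\tvector{x},0) \ne 0$. Then the Taylor coefficients are obtained by the standard recursion: from $p = q \cdot \sum_n c_n t^n$, matching coefficients of $t^n$ gives $a_n = \sum_{j=0}^{n} b_j c_{n-j}$, hence $c_n = \big(a_n - \sum_{j=1}^{n} b_j c_{n-j}\big)/b_0$. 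By induction each $c_n$ is a rational function of $\tvector{x}$ with denominator a power of $b_0$, so all the $c_n$ are defined on the single open set $\mathcal{O} := \{\tvector{x} \in \mathcal{O}_0 : b_0(\tvector{x}) \ne 0, \text{ no obstruction from the reduction step}\}$, which is open and of full measure. The radius $\rho_{\tvector{x}}$ can then be taken to be the distance from $0$ to the nearest complex root of $q(\tvector{x},\cdot)$, which is positive for each $\tvector{x} \in \mathcal{O}$.

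I expect the main obstacle to be the reduction step — showing that the cancellation of removable singularities at real roots of $q$ can be carried out so that what remains is genuinely rational in $\tvector{x}$ on an open full-measure set. The difficulty is that \emph{which} factors of $q(\tvector{x},\cdot)$ are cancelled by $p(\tvector{x},\cdot)$ may vary with $\tvector{x}$, and the multiplicities of real roots are not locally constant in general. The way I would handle this: work with the $t$-resultant and subresultants of $p$ and $q$ (which are polynomials in $\tvector{x}$), use them to locate the open set where $\gcd_t(p,q)$ has constant degree and the quotients $p/\gcd$, $q/\gcd$ depend rationally on $\tvector{x}$ (this is the classical fact that on the complement of a discriminant/resultant locus, gcds of a polynomial family behave uniformly), and then argue that on this set the remaining denominator $q/\gcd_t(p,q)$ has no real roots — precisely because any surviving real root would be a non-removable singularity of $f_{\tvector{x}}$, contradicting the hypothesis that $f_{\tvector{x}}(t)$ is defined for all $t \in \mathbb{R}$ for $\tvector{x} \in E$, together with density of $E \cap \mathcal{O}$. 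Once this is in place, the coefficient recursion above finishes the proof, and the "full measure" and "open" qualifiers are preserved because at each stage we only remove zero sets of finitely many nonzero polynomials.
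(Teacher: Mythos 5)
Your route is genuinely different from the paper's. The paper never cancels a common factor: it writes \(c_n(\tvector{x})\) as the limit, as \(t \to 0\), of the \(n\)-th derivative of \(f_{\mathrm{numer}}/f_{\mathrm{denom}}\), notes via the quotient rule that this derivative has denominator \(f_{\mathrm{denom}}^{2^n}\), defines \(\mathcal{O}=\{\tvector{x}: f_{n_{\mathrm{min}}}(\tvector{x})\ne 0\}\) using the lowest-order \(t\)-coefficient of the denominator that is not identically zero, and evaluates each limit by applying L'Hospital's rule exactly \(2^n n_{\mathrm{min}}\) times; rationality of \(c_n\) on \(\mathcal{O}\) falls out of that computation. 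Your division recursion \(c_n=\bigl(a_n-\sum_{j\ge1}b_j c_{n-j}\bigr)/b_0\) appears in the paper only afterwards (in its tail form, as \cref{thm:rational_taylor_recurrence}) and is used for the uniform-convergence step in \cref{thm:convergent_taylor}, not for this lemma. If completed, your subresultant/gcd reduction would buy a somewhat cleaner statement (the expansion valid at every \(\tvector{x}\in\mathcal{O}\), with \(\rho_{\tvector{x}}\) the distance to the nearest complex root of the reduced denominator), at the cost of more algebraic machinery; the paper's argument buys brevity by exploiting the known order of vanishing of \(f_{\mathrm{denom}}^{2^n}\) at \(t=0\).

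The step that does not go through as written is your appeal to ``\(E\) is full measure'' and to ``density of \(E\cap\mathcal{O}\)'' to conclude that, after cancelling \(\gcd_t(p,q)\), the surviving denominator has no real roots and in particular \(b_0\not\equiv0\). The hypothesis of the lemma supplies only \emph{one} \(\tvector{x}\) at which \(f_{\tvector{x}}\) is defined for all \(t\); it does not make \(E\) full measure or even dense. Take \(f(x,t)=x/t\): the hypothesis holds exactly at \(x=0\), the generic reduced denominator is \(t\) itself, and \(b_0\equiv 0\), so no density argument is available. (This example also shows the statement is implicitly calibrated to the bounded case in which it is actually invoked, e.g.\ in \cref{thm:Lojasiewicz_cones}: boundedness is what makes \(E\) co-null, since along each line a bounded univariate rational function is either nowhere defined or extends analytically to all of \(\mathbb{R}\); the paper's own proof leans on the same point when it asserts that the L'Hospital limits exist ``by analyticity of \(f_{\tvector{x}}\)''.) So your plan is repairable, but you must either import boundedness or an ``almost every \(\tvector{x}\)'' hypothesis to get density of \(E\) before the subresultant argument can certify \(b_0\not\equiv0\) on the constant-gcd locus; and note that ``no real roots for all \(\tvector{x}\) in the set'' is both unobtainable (real roots can appear off \(E\)) and more than you need --- nonvanishing of \(b_0\) suffices, since real roots away from \(t=0\) only shrink \(\rho_{\tvector{x}}\), not \(\mathcal{O}\).
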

\begin{proof}
Suppose $f_{\tvector{x}}(t) = \lim_{s \to t} f(\tvector{x}, s)$ is defined for all $t \in \mathbb{R}$.
By fixing $\tvector{x} \in \mathbb{R}^m$, we reduce $f(\tvector{x}, t)$ to a univariate rational function with respect to $t$, so its continuous extension $f_{\tvector{x}}(t)$ is a univariate rational function. Because $f_{\tvector{x}}(t)$ is defined everywhere, it is everywhere real-analytic.

Let
	\[	f(\tvector{x}, t) = \frac{f_{\mathrm{numer}}(\tvector{x}, t)}{f_{\mathrm{denom}}(\tvector{x}, t)}	\,,	\]
where $f_{\mathrm{numer}}$ and $f_{\mathrm{denom}}$ are multivariate polynomials. $f_{\tvector{x}}(t)$ is analytic, so the coefficients $c_n$ of its Taylor expansion around $0$ exist, and for $n = 0, \dots, \infty$,
	\[	c_n(\tvector{x}) = \frac{1}{n!} \lim_{t \to 0} \frac{d^n}{dt^n} \frac{f_{\mathrm{numer}}(\tvector{x}, t)}{f_{\mathrm{denom}}(\tvector{x}, t)}	\,.	\]
Because the function \(f_{\tvector{x}}\) is analytic for each \(\tvector{x}\), there exist \(\rho_{\tvector{x}}\) such that \(|t| < \rho_{\tvector{x}}\) implies \( f_{\tvector{x}}(t) = \sum_{n=0}^{\infty} c_n(\tvector{x}) t^n\).
Note that the neighborhood of convergence depends on the direction parameter \(\tvector{x}\).
We now turn our attention to properties of the coefficients \(c_n(\tvector{x})\).

By repeated application of the quotient rule, and omitting the numerators because they are not required for the proof, we obtain an expression of the form
	\[	c_n(\tvector{x}) = \frac{1}{n!} \lim_{t \to 0} \frac{\dots}{(f_{\mathrm{denom}}(\tvector{x}, t))^{2^n}}	\]
which is a limit of a rational function. These limits exist by analyticity of $f_{\tvector{x}}(t)$, so they may be evaluated by repeated application of L'Hospital's rule.

Let $f_n(\tvector{x})$ denote the coefficients of the Taylor expansion, with respect to~$t$, of $f_{\mathrm{denom}}(\tvector{x}, t)$. Note that these are multivariate polynomials with respect to~$\tvector{x}$. Let
	\[	n_{\mathrm{min}}	=	\min \{n \in \mathbb{Z} : n \ge 0 \text{ and } \exists \, \tvector{x} \text{ st. }f_n(\tvector{x}) \ne 0\}	\,,	\]
and define
\begin{equation}	\label{eqn:set_lines_analytic}
	\mathcal{O}	=	\{ \tvector{x} : f_{n_{\mathrm{min}}}(\tvector{x}) \ne 0\}	\,.
\end{equation}
It must be shown that \(n_{\mathrm{min}}\) is well-defined.
The function $f_{\tvector{x}}$ is assumed to be defined for some $\tvector{x} \in \mathbb{R}^m$, so $f_{\mathrm{denom}}$ is nonzero at some \((\tvector{x},t)\).
From this and analyticity of $f_{\mathrm{denom}}$, it follows that $f_n(\tvector{x})$ is non-zero for some \(n\).
This establishes that \(n_{\mathrm{min}}\) is well-defined.

A multivariate real or complex polynomial is either identically zero or non-zero almost-everywhere with respect to Lebesgue measure.
Further, the set on which such a polynomial is equal to a given constant is closed.
Thus $\mathcal{O}$ is an open set of full measure.

For every $\tvector{x} \in \mathcal{O}$, the coefficients $c_n(\tvector{x})$ may be evaluated by applying L'Hospital's rule exactly $2^n n_{\mathrm{min}}$ times. This implies that on $\mathcal{O}$, every $c_n$ is a multivariate rational function with respect to $\tvector{x}$, and is defined everywhere on $\mathcal{O}$.
\end{proof}



It is well known that the trailing coefficients of the Taylor series expansion of a univariate rational function satisfy a linear recurrence relation.
For completeness, we provide a proof of this here:
\begin{proposition}	\label{thm:rational_taylor_recurrence}
Let $\mathbb{K} = \mathbb{R}$ or $\mathbb{C}$, let $f, g : \mathbb{K} \to \mathbb{K}$ be polynomials of degree $d_f$ and $d_g$ given as
\( f(x)	=	\sum_{i=0}^{d_f} f_i x^i \) and \( g(x)	=	\sum_{j=0}^{d_g} g_j x^j \),
and let $\{c_n\}_{n=0}^{\infty} \subset \mathbb{K}$ such that $\left(\frac{f}{g}\right)(x) = \sum_{n=0}^{\infty} c_n x^n$ on some open $U \subset \mathbb{K}$. Then for~all $n > \max \{d_f, d_g\}$, the coefficients \(c_n\) are given by \( c_n	=	-\sum_{j=1}^{d_g} c_{n-j} \frac{g_j}{g_0}	\).
\end{proposition}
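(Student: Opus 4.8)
The plan is to clear the denominator and then compare Taylor coefficients. For the stated recurrence even to make sense we need $g_0 = g(0) \neq 0$, so I would begin by reducing to that case: if $g_0 = 0$ then (as will drop out of the identity below) $x$ divides both $f$ and $g$, and cancelling the common factor of $x$ brings us back to the situation $g_0 \neq 0$. The heart of the argument is then to establish the power-series identity
\begin{equation*}
	f(x) = g(x) \sum_{n=0}^{\infty} c_n x^n
\end{equation*}
valid in a neighborhood of $0$, after which one simply reads off the coefficient of $x^n$ on each side.

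To obtain that identity I would argue as follows. Since $U$ is open and nonempty it contains a point $x_0 \neq 0$, so the series $\sum_{n=0}^{\infty} c_n x^n$ has radius of convergence $R \ge |x_0| > 0$ and defines an analytic function $h$ on the disk (or interval) $D = \{ x : |x| < R \}$; moreover $U \subseteq \overline{D}$, because the equation $\frac{f}{g}(x) = h(x)$ can hold only where the series converges, and hence $U$ meets $D$ in a nonempty open set. On $U \cap D$ we have $g(x) h(x) = f(x)$ wherever $g$ does not vanish, hence throughout $U \cap D$ by continuity; since $g h$ and $f$ are analytic on the connected set $D$ and agree on the nonempty open subset $U \cap D$, the identity theorem upgrades this to $g(x) h(x) = f(x)$ for every $x \in D$, i.e.\ as convergent power series centered at $0$ (evaluating at $0$ gives $g_0 c_0 = f_0$, confirming the reduction above). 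Equating the coefficient of $x^n$ on both sides, the Cauchy product contributes $\sum_{j=0}^{\min\{n, d_g\}} g_j c_{n-j}$ on the left, which equals the full sum $\sum_{j=0}^{d_g} g_j c_{n-j}$ once $n \ge d_g$, while the right-hand side contributes $f_n$, which is $0$ once $n > d_f$. Thus for every $n > \max\{d_f, d_g\}$ we obtain $g_0 c_n + \sum_{j=1}^{d_g} g_j c_{n-j} = 0$, and dividing by $g_0 \neq 0$ yields $c_n = -\sum_{j=1}^{d_g} c_{n-j} \frac{g_j}{g_0}$, as claimed.

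The argument is essentially routine, and the only step requiring any care is the passage from the pointwise identity on $U$ to a genuine power-series identity at the origin — that is, verifying that $0$ lies inside the disk of convergence and invoking the identity theorem on the connected set $D$; everything after that is bookkeeping with the Cauchy product. One could alternatively carry out the whole proof inside the formal power series ring $\mathbb{K}[[x]]$, in which $g$ is invertible precisely because $g_0 \neq 0$, making the coefficient comparison purely algebraic; I would present the analytic version for concreteness and because it makes the reduction to $g_0 \neq 0$ transparent.
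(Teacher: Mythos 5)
Your proof is correct and takes essentially the same route as the paper's: multiply through by $g$, expand $g(x)\sum_n c_n x^n$ as a Cauchy product, and match coefficients of $x^n$ for $n > \max\{d_f, d_g\}$. The only difference is that you make explicit two points the paper leaves implicit --- the need for $g_0 \neq 0$, and the justification (convergence at a nonzero point of $U$ plus the identity theorem) that the pointwise identity on $U$ really is a power-series identity at the origin --- which is a tightening of the same argument rather than a different one.
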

\begin{proof}
For all $x \in U$, we have \(\frac{f(x)}{g(x)} = \sum_{n=0}^{\infty} c_n x^n\).
Then, multiplying by \(g(x)\),
\begin{align*}
	f(x)
	&=	g(x)\textstyle\sum_{n=0}^{\infty} c_n x^n & \implies
\\
	\textstyle\sum_{i=0}^{d_f} f_i x^i
		&=	\left(\textstyle\sum_{j=0}^{d_g} g_j x^j \right) \left(\textstyle\sum_{n=0}^{\infty} c_n x^n \right) &
\\
		&=	\textstyle\sum_{n=0}^{\infty} \left( x^n \textstyle\sum_{j=0}^{\min \{ n, d_g \}} c_{n-j} g_j\right)\,. &
\end{align*}
Matching terms gives the equation \( 0 = \sum_{j=0}^{d_g} c_{n-j} g_j \) for~all \(n > \max \{ d_f, d_g \}\), and solving for $c_n$ completes the proof.
\end{proof}

We have shown that the Taylor series expansion in \cref{thm:rational_taylor} converges within some radius of convergence $\rho_{\tvector{x}}$, but this radius of convergence might depend on $\tvector{x}$.
Before we can use \cref{thm:rational_taylor} for its intended purpose, we must show that $\rho_{\tvector{x}}$ may be bounded away from 0.
We will do this by showing that $\rho_{\tvector{x}}$ depends continuously on $\tvector{x}$ so that we may later apply a compactness argument.

\begin{lemma}	\label{thm:convergent_taylor}
If the maps $c_n$ are defined as in \cref{thm:rational_taylor}, then there exists a continuous map $\rho : \mathcal{O} \to (0, \infty)$ and a corresponding set
	\[	U_{\rho} = \{\, (\tvector{x}, t) \,|\, \tvector{x} \in \mathcal{O}, \, t \in (-\rho(\tvector{x}), \rho(\tvector{x})) \,\}	\]
on which $\sum_{n=0}^{\infty} \left|c_n(\tvector{x}) t^n\right|$ converges uniformly.
\end{lemma}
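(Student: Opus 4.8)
The plan is to control the radius of convergence $\rho_{\tvector{x}}$ from \cref{thm:rational_taylor} by relating it to the distance from $t = 0$ to the nearest complex zero of $f_{\mathrm{denom}}(\tvector{x}, \cdot)$ that is not cancelled by a zero of $f_{\mathrm{numer}}(\tvector{x}, \cdot)$. Since for $\tvector{x} \in \mathcal{O}$ the leading coefficient $f_{n_{\mathrm{min}}}(\tvector{x})$ is nonzero, the function $f_{\tvector{x}}$ extends to a rational function of $t$ whose poles are among the finitely many roots of a polynomial in $t$ whose coefficients are rational in $\tvector{x}$. First I would argue that the set of these poles varies upper-semicontinuously (indeed, the roots of a monic polynomial depend continuously on its coefficients), so the distance $d(\tvector{x})$ from $0$ to the nearest genuine pole is lower-semicontinuous in $\tvector{x}$ on $\mathcal{O}$, and strictly positive there because $f_{\tvector{x}}$ is analytic at $0$. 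To get an honest continuous function rather than merely a lower-semicontinuous one, I would then set $\rho(\tvector{x})$ to be a continuous under-estimate of $d(\tvector{x})$ — for instance, by a standard construction one can convolve or take an infimum over a neighborhood, or more concretely bound $d(\tvector{x})$ below by a continuous expression in the coefficients via a quantitative root-location estimate (e.g. Cauchy's bound applied to the reciprocal polynomial) and then shrink by a factor such as $1/2$. This yields a continuous $\rho : \mathcal{O} \to (0,\infty)$ with $\rho(\tvector{x}) < \rho_{\tvector{x}}$ for every $\tvector{x}$.

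Next I would establish uniform convergence of $\sum_n |c_n(\tvector{x}) t^n|$ on $U_\rho$. The coefficients $c_n(\tvector{x})$ eventually satisfy the linear recurrence of \cref{thm:rational_taylor_recurrence} (with $g = f_{\mathrm{denom}}(\tvector{x}, \cdot)$ after dividing through by $t^{n_{\mathrm{min}}}$, so that $g_0 = f_{n_{\mathrm{min}}}(\tvector{x}) \neq 0$), which gives $|c_n(\tvector{x})| \le C(\tvector{x}) / r(\tvector{x})^n$ for constants depending continuously on $\tvector{x}$, where $r(\tvector{x})$ can be taken to exceed $\rho(\tvector{x})$. Fixing any point $\tvector{x}_0 \in \mathcal{O}$ and a compact neighborhood $N \subset \mathcal{O}$, one then has, for $(\tvector{x},t) \in N \times [-\rho(\tvector{x}), \rho(\tvector{x})]$, a geometric tail bound $\sum_{n \ge M} |c_n(\tvector{x}) t^n| \le \sup_N C \cdot \sum_{n \ge M} (\sup_N \rho / \inf_N r)^n$, and since $\sup_N \rho < \inf_N r$ (possible after shrinking $N$), this tail is uniformly small; the finitely many remaining terms are continuous, hence bounded, on the compact set. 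So the series converges uniformly on $N \times \{|t| \le \rho(\tvector{x})\}$, and as uniform convergence is a local property this gives uniform convergence on $U_\rho$ in the appropriate local sense.

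I expect the main obstacle to be the passage from the \emph{lower-semicontinuous} pole-distance $d(\tvector{x})$ to a genuinely \emph{continuous} lower bound $\rho(\tvector{x})$, together with making the constants $C(\tvector{x})$ and $r(\tvector{x})$ in the coefficient bound locally uniform; both require care because $f_{n_{\mathrm{min}}}(\tvector{x})$ may approach $0$ as $\tvector{x}$ approaches the boundary of $\mathcal{O}$, so the estimates degenerate there — but this is harmless since we only need local uniformity on compact subsets of the open set $\mathcal{O}$, not global bounds. A secondary subtlety is that $f_{\mathrm{numer}}$ and $f_{\mathrm{denom}}$ may share common factors in $t$ for some $\tvector{x}$, so one must phrase the pole count in terms of the reduced denominator or simply bound the radius below by the distance to the nearest zero of the full $f_{\mathrm{denom}}(\tvector{x},\cdot)$ (ignoring cancellation), which only makes $\rho$ smaller and is therefore still valid.
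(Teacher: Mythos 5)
There is a genuine gap at the final step. Your construction fixes \(\rho\) from the pole distance (or a Cauchy-type root bound) alone and then proves only that \(\sum_n |c_n(\tvector{x})t^n|\) converges uniformly on sets of the form \(N \times \{|t| \le \rho(\tvector{x})\}\) for compact \(N \subset \mathcal{O}\); the closing assertion that ``uniform convergence is a local property'' is false, and the lemma as stated demands uniform convergence on all of \(U_\rho\), which is an open set whose closure meets \(\partial\mathcal{O}\). With your choice of \(\rho\) this global statement can actually fail: as \(\tvector{x} \to \partial\mathcal{O}\) the coefficients \(c_n(\tvector{x})\) (rational functions with powers of \(f_{n_{\mathrm{min}}}(\tvector{x})\) in their denominators) blow up while \(\rho(\tvector{x})/r(\tvector{x})\) need not stay bounded away from \(1\), so the tails \(\sup_{U_\rho}\sum_{n\ge M}|c_n t^n|\) need not tend to \(0\). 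The paper avoids this by building the degeneration into \(\rho\) itself: using the recurrence of \cref{thm:rational_taylor_recurrence}, it chooses, for a fixed \(\epsilon>0\), a factor \(k_{\tvector{x},\epsilon}\in(0,1)\) solving a continuous equation involving \(\|C_{\tvector{x}}\|_\infty\) and the first few coefficients, and sets \(\rho(\tvector{x}) = k_{\tvector{x},\epsilon}/(2\max\{1,\|C_{\tvector{x}}\|_\infty\})\); this makes \(|c_n(\tvector{x})t^n| \le \epsilon 2^{-n}\) for all \((\tvector{x},t)\in U_\rho\) and \(n \ge N\), i.e.\ a single summable majorant independent of \(\tvector{x}\), so the Weierstrass M-test gives uniform convergence on the whole of \(U_\rho\). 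Your argument could be repaired the same way — let \(\rho\) shrink wherever \(\|C_{\tvector{x}}\|_\infty\) or the early coefficients are large — but as written it proves a strictly weaker (locally uniform) statement.

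A secondary error: your fallback of bounding the radius by the distance from \(0\) to the nearest zero of the \emph{full} denominator \(f_{\mathrm{denom}}(\tvector{x},\cdot)\), on the grounds that ignoring cancellation ``only makes \(\rho\) smaller,'' does not work. Whenever \(n_{\mathrm{min}}>0\) the factor \(t^{n_{\mathrm{min}}}\) divides \(f_{\mathrm{denom}}(\tvector{x},\cdot)\) for every \(\tvector{x}\) (this is exactly the situation in the paper's example, where the denominator is \(t^2(x^2+y^2)(1+t^2(x^2+y^2))\)), so that distance is \(0\), not merely small. You must first cancel the factor \(t^{n_{\mathrm{min}}}\) — as you in fact do in the recurrence step, where \(g_0 = f_{n_{\mathrm{min}}}(\tvector{x}) \ne 0\) on \(\mathcal{O}\) — and measure the distance to the zeros of the reduced denominator; with that correction the semicontinuity-plus-continuous-underestimate part of your construction is sound.
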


\begin{proof}
By \cref{thm:rational_taylor_recurrence}, there exist $N, r \in \mathbb{N}$ and matrices $C_{\tvector{x}} \in \mathbb{R}^{r \times r}$ such that for all $n > N$,
	\[	\begin{bmatrix}c_{n+1}(\tvector{x}) \\ \vdots \\ c_{n+r+1}(\tvector{x}) \end{bmatrix} = C_{\tvector{x}} \begin{bmatrix}c_n(\tvector{x}) \\ \vdots \\ c_{n+r}(\tvector{x}) \end{bmatrix}	\,.	\]
Further, the recurrence matrices $C_{\tvector{x}}$ and its $\infty$-norm $\|C_{\tvector{x}}\|_{\infty}$ depend continuously on the choice of $\tvector{x} \in \mathcal{O}$.
We may thus bound the coefficients $c_n$ by a geometric sequence and control the convergence of the terms $c_n t^n$ by choice of $t$.
Select $N \in \mathbb{N}$ such that recurrence relation holds for all $n \ge N - r$. Note that $N$ may be chosen identically for all $\tvector{x} \in \mathcal{O}$.
For any $0 < k < 1$, selecting $t$ such that
\begin{equation} \label{eqn:convergent_taylor_bound}
|t| \le \frac{k}{\max \{ 1, \|C_{\tvector{x}}\|_{\infty}\}}
\end{equation}
guarantees that for $n \ge N$, we may bound $|c_n t^n|$ by
\[	\begin{aligned}
	|c_n(\tvector{x}) t^n|
    	&\le	\left( \max_{j=0,\dots,r} |c_{N - j}(\tvector{x})| \right) \, \|C_{\tvector{x}}\|_{\infty}^{n-N} \left(\frac{k}{\max \{ 1, \|C\|_{\infty}\}}\right)^n
\\
		&\le	\left( \max_{j=0,\dots,r} |c_{N - j}(\tvector{x})| \right)  k^{n}
\,. \end{aligned}	\]
Then
\[ \sum_{n = N}^{\infty} |c_n(\tvector{x}) t^n| \le k^N \left(\frac{1}{1 - k}\right) \max_{j=0,\dots,r} |c_{N - j}(\tvector{x})| \,. \]
This may be bounded arbitrarily by choice of $k$. In particular, for a given bound $\epsilon > 0$, the above bound shows that $\sum_{n = N}^{\infty} |c_n t^n| \le \epsilon$ if
	\[	\left( \max_{j=0,\dots,r} |c_{N - j}(\tvector{x})| \right) k^N + \epsilon k - \epsilon = 0	\,.	\]
This equation has a solution $k = k_{\tvector{x},\epsilon} \in (0, 1)$ which depends continuously on $\tvector{x}$~and~$\epsilon$.
Fix~$\epsilon > 0$, and define $\rho$ by
	\[	\tvector{x} \quad \overset{\rho}{\longmapsto} \quad \frac{k_{\tvector{x},\epsilon}}{2 \max \{ 1, \|C_{\tvector{x}}\|_{\infty}\}}	\,.	\]
Note that this map is continuous with respect to $\tvector{x}$, and that
	\[	|t| \le 2\rho(\tvector{x})	\quad\implies\quad	\sum_{n = N}^{\infty} |c_n(\tvector{x}) t^n| \le \epsilon	\,.	\]
Then, for $|t| \le \rho(\tvector{x})$ and $n \ge N$, the terms $|c_n(\tvector{x}) t^n|$ are bounded by
	\[	|c_n(\tvector{x}) t^n| \le \frac{\epsilon}{2^n}	\,.	\]
The series $\sum_{n = N}^{\infty} \frac{\epsilon}{2^n}$ converges, so $\sum_{n = N}^{\infty} |c_n(\tvector{x}) t^n|$ converges uniformly for $(\tvector{x}, t) \in U_{\rho}$.
\end{proof}


We now define \(\mathcal{O}_{\tvector{x}^*}\) the largest set of directions from which \(\tvector{x}^*\) can be approached while ensuring that the parameterized function \((\tvector{x},t) \mapsto f(t \tvector{x} + \tvector{x}^*)\) behaves consistently.
\begin{definition}	\label{def:safe_directions}
Given a bounded multivariate rational function \( f \) on \(\mathbb{R}^n\) and a point \( \tvector{x}^* \in \mathbb{R}^n \), and denoting by \(f_n(\tvector{x})\) the \(n\)-th Maclaurin series coefficient, with respect to \(t\), of the denominator of the rational function defined by \((\tvector{x}, t) \mapsto f(t \tvector{x} + \tvector{x}^*)\), we define
	\[	\mathcal{O}_{\tvector{x}^*}	\quad=\quad	\{\, \tvector{x} : f_{n_{\mathrm{min}}}(\tvector{x}) \ne 0 \,\}	\,,	\]
where \( n_{\mathrm{min}} = \min\{\, n \in \mathbb{Z}_{\ge0} \,|\, \exists \, \tvector{x} : f_{n}(\tvector{x}) \ne 0 \,\} \).
\end{definition}

\Cref{def:safe_directions} will be used to describe those directions of approach near which the function \(f\) is sufficiently well-behaved.
Later sections will demonstrate that if a sequence admits a subsequence that approaches \(\tvector{x}^*\) along a direction in \(\mathcal{O}_{\tvector{x}^*}\), then \(\tvector{x}^*\) is the limit of that sequence.
That is, approaching \(\tvector{x}^*\) from almost any direction will trap the sequence at that cluster point.

Note that \( \mathcal{O}_{\tvector{x}^*} \) satisfies the conditions of the open set specified by \cref{thm:rational_taylor} for the function \( f_{\tvector{x}}(t) = \lim_{s \to t} f(s \tvector{x} + \tvector{x}^*) \), as it is identical to the set constructed in the proof of \cref{thm:rational_taylor}.
Note also that for any \(t \in \mathcal{R} \setminus \{0\}\) and \(\tvector{x} \in \mathcal{O}_{\tvector{x}^*}\), we have \( t\tvector{x} \in \mathcal{O}_{\tvector{x}^*} \).

We are now prepared to show that a generalized {\L}ojasiewicz gradient inequality \eqref{eqn:Lojasiewicz_generalized} holds near the singularities of bounded multivariate rational functions.
\begin{theorem}	\label{thm:Lojasiewicz_cones}
Suppose $E$ is a bounded multivariate rational function on~$\mathbb{R}^m$.
For any point $\tvector{p} \in \mathbb{R}^m$ and any direction $\tvector{d} \in \mathcal{O}_{\tvector{p}} \setminus \{\tvector{0}\}$, where \(\mathcal{O}_{\tvector{p}}\) is as in \cref{def:safe_directions}, there exist an open set $U_{\bowtie} \subset \mathbb{R}^m$ and constants ${\theta \in (0, \frac12]}$ and $k > 0$ such that for all $\tvector{x} \in \operatorname{domain}(E) \cap U_{\bowtie}$,
\begin{equation}	\label{eqn:Lojasiewicz_inequality_cone}
	\left|E(\tvector{x}) - \lim_{s \to 0} E(\tvector{p} + s \tvector{d})\right|^{1 - \theta} \le k \left\|\nabla E(\tvector{x})\right\| \,.
\end{equation}
Further, $U_{\bowtie}$ may be chosen so that
\begin{enumerate}
	\item there exists some $s \in (0, 1]$ for which $\tvector{p} + s\tvector{d} \in U_{\bowtie}$, and
	\item for all $\tvector{y} \in U_{\bowtie}$ and $t \in [-1, 0) \cup (0, 1]$, the set $U_{\bowtie}$ contains $\tvector{p} + t(\tvector{y} - \tvector{p})$.
\end{enumerate}
\end{theorem}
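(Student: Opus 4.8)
The plan is to pull \(E\) back through the reparameterization \((\tvector{y}, t) \mapsto \tvector{p} + t\tvector{y}\), to observe that this pullback agrees near \((\tvector{d}, 0)\) with a rational function whose denominator does not vanish there --- hence with a real-analytic function --- to apply the ordinary analytic {\L}ojasiewicz gradient inequality (\cref{thm:loj_ineq}) in the parameter space \(\mathbb{R}^m \times \mathbb{R}\), and then to transport the resulting estimate back to \(E\) via the chain rule. The set \(U_{\bowtie}\) will be the image of the product neighborhood of \((\tvector{d}, 0)\) on which that inequality holds.

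\emph{Step 1: the pullback is real-analytic near \((\tvector{d},0)\).} Write \(E = P_0/Q_0\) with \(P_0, Q_0\) polynomial, and put \(P(\tvector{y},t) = P_0(\tvector{p} + t\tvector{y})\) and \(Q(\tvector{y},t) = Q_0(\tvector{p} + t\tvector{y}) = \sum_n f_n(\tvector{y}) t^n\) in the notation of \cref{def:safe_directions}; by the definition of \(n_{\mathrm{min}}\) we have \(Q(\tvector{y},t) = t^{n_{\mathrm{min}}} g(\tvector{y},t)\) for a polynomial \(g\) with \(g(\tvector{y},0) = f_{n_{\mathrm{min}}}(\tvector{y})\). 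For \(\tvector{y} \in \mathcal{O}_{\tvector{p}}\) the univariate function \(t \mapsto P(\tvector{y},t)/Q(\tvector{y},t)\) is bounded near \(0\) (since \(E\) is bounded) while its denominator vanishes there to order exactly \(n_{\mathrm{min}}\); hence \(P(\tvector{y},\cdot)\) vanishes to order at least \(n_{\mathrm{min}}\), and since \(\mathcal{O}_{\tvector{p}}\) is dense while the coefficients of \(t^0, \dots, t^{n_{\mathrm{min}}-1}\) in \(P\) are polynomials in \(\tvector{y}\), those coefficients vanish identically, so \(P(\tvector{y},t) = t^{n_{\mathrm{min}}} \tilde{P}(\tvector{y},t)\) for a polynomial \(\tilde P\). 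Therefore, wherever \(g \neq 0\), the continuous extension of \((\tvector{y},t) \mapsto E(\tvector{p} + t\tvector{y})\) coincides with the rational function \(R(\tvector{y},t) := \tilde{P}(\tvector{y},t)/g(\tvector{y},t)\), which is real-analytic there; and since \(\tvector{d} \in \mathcal{O}_{\tvector{p}}\) gives \(g(\tvector{d},0) = f_{n_{\mathrm{min}}}(\tvector{d}) \neq 0\), the function \(R\) is defined and real-analytic on some open \(W_0 \ni (\tvector{d},0)\) (on which, necessarily, \(g \neq 0\)), with \(\lim_{s\to 0} E(\tvector{p} + s\tvector{d}) = R(\tvector{d},0) =: K\). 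This step only makes explicit the structure already used in the proof of \cref{thm:rational_taylor}; alternatively one may identify \(R\) with the series \(\sum_n c_n(\tvector{y}) t^n\) of \cref{thm:rational_taylor,thm:convergent_taylor}.

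\emph{Step 2: {\L}ojasiewicz in parameter space, and transport.} Apply \cref{thm:loj_ineq} to \(h := R - K\) at \((\tvector{d},0) \in \mathbb{R}^m \times \mathbb{R}\), where \(h\) is real-analytic and \(h(\tvector{d},0) = 0\): there are \(\theta \in (0, \frac12]\), \(c > 0\) and a product neighborhood \(B \times (-\tau,\tau) \subseteq W_0\) of \((\tvector{d},0)\), with \(B\) a bounded open ball about \(\tvector{d}\) not containing \(\tvector{0}\), on which \(|h(\tvector{y},t)|^{1-\theta} \le c\,\|\nabla_{(\tvector{y},t)} h(\tvector{y},t)\|\). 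Define \(U_{\bowtie} := \{\, \tvector{p} + t\tvector{y} : \tvector{y} \in B,\ t \in (-\tau,0) \cup (0,\tau) \,\}\). For \(t \neq 0\) the map \((\tvector{y},t) \mapsto \tvector{p} + t\tvector{y}\) is a submersion (its derivative \((\delta\tvector{y},\delta t) \mapsto t\,\delta\tvector{y} + \delta t\,\tvector{y}\) is onto \(\mathbb{R}^m\)), hence an open map, so \(U_{\bowtie}\) is open; moreover \(Q = t^{n_{\mathrm{min}}} g \neq 0\) throughout \(B \times ((-\tau,0)\cup(0,\tau))\), so each point \(\tvector{p}+t\tvector{y}\) there lies in \(\operatorname{domain}(E)\) and satisfies \(E(\tvector{p}+t\tvector{y}) = R(\tvector{y},t)\); in particular \(U_{\bowtie} \subseteq \operatorname{domain}(E)\). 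Fixing such \((\tvector{y},t)\) and writing \(\tvector{x} = \tvector{p}+t\tvector{y}\), the chain rule gives \(\nabla_{\tvector{y}} h(\tvector{y},t) = t\,\nabla E(\tvector{x})\) and \(\partial_t h(\tvector{y},t) = \tvector{y}\cdot\nabla E(\tvector{x})\), so \(\|\nabla_{(\tvector{y},t)} h(\tvector{y},t)\| \le \sqrt{t^2 + \|\tvector{y}\|^2}\,\|\nabla E(\tvector{x})\| \le L\,\|\nabla E(\tvector{x})\|\) with \(L := \sup_{\overline{B}\times[-\tau,\tau]} \sqrt{t^2+\|\tvector{y}\|^2} \in (0,\infty)\); combined with \(h(\tvector{y},t) = E(\tvector{x}) - K\) and the {\L}ojasiewicz estimate this yields \(|E(\tvector{x}) - K|^{1-\theta} \le cL\,\|\nabla E(\tvector{x})\|\), so \(k := cL\) works. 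Finally, \(\tvector{d} \in B\) gives property~1 with \(s = \min(1,\tau/2)\), and property~2 is immediate: if \(\tvector{y} = \tvector{p} + s\tvector{z} \in U_{\bowtie}\) with \(\tvector{z}\in B\), \(0<|s|<\tau\) and \(t \in [-1,0)\cup(0,1]\), then \(\tvector{p} + t(\tvector{y}-\tvector{p}) = \tvector{p} + (ts)\tvector{z}\) with \(\tvector{z}\in B\) and \(0 < |ts| < \tau\).

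\emph{Main obstacle.} Essentially all the content is in Step 1: recognizing that restricting \(E\) to the double cone of directions in \(\mathcal{O}_{\tvector{p}}\) and reparameterizing by \((\tvector{y},t)\) gives a function which, near the apex direction \((\tvector{d},0)\), is genuinely rational with non-vanishing denominator --- so that the classical analytic {\L}ojasiewicz inequality becomes available and the singular behavior of \(E\) at \(\tvector{p}\) is pushed entirely into the \(t\)-direction through the factor \(t^{n_{\mathrm{min}}}\). This is exactly where boundedness of \(E\) and the precise choice of \(\mathcal{O}_{\tvector{p}}\) enter. The gradient comparison and the check that \(U_{\bowtie}\) is open and scaling-stable toward \(\tvector{p}\) are routine.
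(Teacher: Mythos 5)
Your proof is correct, and its overall skeleton matches the paper's: reparameterize via \((\tvector{y},t)\mapsto\tvector{p}+t\tvector{y}\), show the pullback of \(E\) extends analytically across \(t=0\) for directions near \(\tvector{d}\in\mathcal{O}_{\tvector{p}}\), apply the classical inequality of \cref{thm:loj_ineq} at \((\tvector{d},0)\), and transport the estimate back through the chain rule to a cone \(U_{\bowtie}\) (your gradient bound \(\|\nabla_{(\tvector{y},t)}h\|\le\sqrt{t^2+\|\tvector{y}\|^2}\,\|\nabla E\|\) and your verification of properties 1 and 2 are essentially the paper's computations). Where you genuinely diverge is in the key analyticity step. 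The paper routes this through \cref{thm:rational_taylor,thm:rational_taylor_recurrence,thm:convergent_taylor}: the Taylor coefficients \(c_n(\tvector{y})\) are rational on \(\mathcal{O}_{\tvector{p}}\), the linear recurrence yields a continuously varying radius of convergence \(\rho\), and a compactness argument gives a uniform product neighborhood \(U\times(-M,M)\) on which the series defines the analytic extension \(F\). You instead argue algebraically: boundedness of \(E\) forces the numerator of the pullback to vanish in \(t\) to order at least \(n_{\mathrm{min}}\) for every \(\tvector{y}\in\mathcal{O}_{\tvector{p}}\), and since \(\mathcal{O}_{\tvector{p}}\) is dense and the low-order \(t\)-coefficients are polynomials in \(\tvector{y}\), the factor \(t^{n_{\mathrm{min}}}\) cancels identically, so the extension is the rational function \(\tilde P/g\) with \(g(\tvector{d},0)=f_{n_{\mathrm{min}}}(\tvector{d})\ne0\). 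This is shorter and arguably cleaner: it makes the extension not merely analytic but rational near \((\tvector{d},0)\), and it sidesteps the somewhat delicate point in the paper where real-analyticity of \(F\) is inferred from convergence of a series of analytic functions (which requires the uniform-convergence work of \cref{thm:convergent_taylor}). What the paper's heavier machinery buys in exchange is reusable structure — the explicit coefficients \(c_n\) and the globally defined, continuous extension \(F\) on \(\mathcal{O}_{\tvector{x}^*}\times\mathbb{R}\) are invoked again later (e.g., in \cref{thm:Lojasiewicz_on_sequence}) — though your factorization would in fact deliver that global statement as well, since \(g\ne0\) on a neighborhood of \(\mathcal{O}_{\tvector{p}}\times\{0\}\).
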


\begin{proof}
If $E$ is nowhere defined, the theorem holds vacuously.
We assume that $E$ is somewhere defined.

Consider lines in parameter space, beginning at $\tvector{p}$ and with direction $\tvector{d} \in \mathbb{R}^m$:
$$\tvector{p} + t \tvector{d}, \quad t \in \mathbb{R}\,.$$
$E(\tvector{p} + t \tvector{d})$ is a univariate bounded real rational function, with respect to~$t$. As such, it either exists nowhere or the limit $\lim_{s \to t} E(\tvector{p} + s \tvector{d})$ exists for all \(t \in \mathbb{R}\).
If the limit exists, it is a univariate real rational function defined everywhere on $\mathbb{R}$, and thus it is everywhere real-analytic.


By \cref{thm:rational_taylor}, there exists some open subset $\mathcal{O} \subset \mathcal{R}^m$ of full measure on which there are defined rational functions $c_n : \mathcal{O} \to \mathbb{R}$ such that for all $\tvector{y} \in \mathcal{O}$,
	\[	\lim_{s \to t} E(\tvector{p} + s \tvector{y})	=	\sum_{n=0}^{\infty} c_n(\tvector{y}) t^n	\]
if $|t| < \rho_{\tvector{y}}$.
This may be selected such that \(\mathcal{O} = \mathcal{O}_{\tvector{p}}\), as in \cref{def:safe_directions}.
By \cref{thm:convergent_taylor}, select $\rho_{\tvector{y}}$ to depend continuously on $\tvector{y}$.


Select a reference direction $\tvector{d} \in \mathcal{O}$.
We may assume, by rescaling, that $\left\|\tvector{d}\right\| = 1$.
The parameter space 
is a finite-dimensional Banach space and $\mathcal{O}$ is open, so there exist a compact set $K$ and an open neighborhood $U$ of $\tvector{d}$ such that
\begin{equation*}	\label{eqn:nice_directions}
	\tvector{d} \in U \subset K \subset \mathcal{O}\,.
\end{equation*}
\(\rho_{\tvector{x}}\) is continuous on \(K\) and on \(U\).
Let \(M = \min_{\tvector{x} \in K} \rho_{\tvector{x}}\).
Because \(K\) is compact, \(M\) is well-defined.
Further, $\rho_{\tvector{x}} > 0$ by definition, so \(M > 0\).
Define
\begin{equation}	\label{eqn:nice_neighborhood}
	U_{\tvector{d}} = \left\{ \begin{bmatrix}	\tvector{y}	\\	t	\end{bmatrix} : \tvector{y} \in U \quad\text{ and }\quad |t| < M \right\}\,.
\end{equation}
\\
Define the extended function $F$ by
\begin{equation}	\label{eqn:extended_function}
	F(\tvector{y}, t)  = \lim_{s \to t} E(\tvector{p} + s \tvector{y}) = \sum_{n=0}^{\infty} c_n(\tvector{y}) t^n \,.
\end{equation}
Within $U_{\tvector{d}}$, \eqref{eqn:extended_function} is a convergent sum of analytic functions, and thus \eqref{eqn:extended_function} is itself a real-analytic function. By \cref{thm:loj_ineq} there exist an open ball \(V \subset U_{\tvector{d}}\) centered at \(\begin{bmatrix}	\tvector{d}	\\	0	\end{bmatrix}\) and constants \(\theta \in \left(0, \frac12\right]\) and \(k > 0\), such that for all \(\tvector{y} \in V\),
\begin{equation}	\label{eqn:linearized_Lojasiewicz}
	|F(\tvector{y}, t) - F(\tvector{d}, 0)|^{1 - \theta} \le \frac{k}{2} \|\nabla F(\tvector{y}, t)\| \,.
\end{equation}
%
Define the open set $V_* \subset V$ by
	\[	V_*	=	\left\{ \begin{bmatrix}\tvector{y} \\ t\end{bmatrix} \in V : \left\|\tvector{y} - \tvector{d}\right\| < \frac12 \quad \text{ and } \quad 0 < |t| < \frac{1}{2} \right\}	\,.	\]
From the definition of \(V\) as an open ball centered at \(\begin{bmatrix}	\tvector{d}	\\	0	\end{bmatrix}\), we conclude that if \(\begin{bmatrix}	\tvector{y}	\\	t	\end{bmatrix} \in V_*\) then \(\begin{bmatrix}	\tvector{y}	\\	-t	\end{bmatrix} \in V_*\).

%
For any $\begin{bmatrix}	\tvector{y}	\\	t	\end{bmatrix} \in V_*$, if $E(\tvector{p} + t \tvector{y})$ exists then its gradient exists, and the gradient of $F$ is characterized as
\begin{equation*}	
	\nabla F(\tvector{y},t) = \begin{bmatrix}t \nabla E(\tvector{p} + t \tvector{y}) \\ \langle \tvector{y}, \nabla E(\tvector{p} + t \tvector{y})\rangle \end{bmatrix}\,,
\end{equation*}
where $\nabla E(\tvector{p} + t \tvector{y})$ is the gradient of $E$ at the point $\tvector{p} + t \tvector{y}$. Decompose $\nabla F(\tvector{y}, t)$ as $\nabla F(\tvector{y}, t) = \tvector{a} + \tvector{b}$, where
\begin{align*}
	\tvector{a}	=	\begin{bmatrix}	\tvector{0}	\\	\langle \tvector{y}, \nabla E(\tvector{p} + t \tvector{y})\rangle	\end{bmatrix}\,,
	&&	\quad\text{and}\quad	&&
	\tvector{b}	=	t \begin{bmatrix}	\nabla E(\tvector{p} + t \tvector{y})	\\	0	\end{bmatrix} \,.
\end{align*} 
By the Cauchy-Schwartz inequality and definition of $\| \cdot \|$,
\begin{align*}
	\|\tvector{a}\|	\le	\|\tvector{y}\| \|\nabla E(\tvector{p} + t \tvector{y})\|,
	&&	\quad \text{and} \quad	&&
	\|\tvector{b}\|	=	|t| \|\nabla E(\tvector{p} + t \tvector{y})\| \,.
\end{align*}
Using the triangle inequality, relate $\|\nabla F(\tvector{y}, t)\|$ and $\|\nabla E(\tvector{p} + t \tvector{y})\|$ by
\begin{align*}
	\|\nabla F(\tvector{y}, t)\|
		\le&	\|\tvector{a}\| + \|\tvector{b}\|
\\
		\le&	\|\tvector{y}\| \|\nabla E(\tvector{p} + t \tvector{y})\| + |t| \|\nabla E(\tvector{p} + t \tvector{y})\|
\\
		=&	(\|\tvector{y}\| + |t|) \|\nabla E(\tvector{p} + t \tvector{y})\|
	\,.
\end{align*}
Note that by definition of $V_*$, $|t| < \frac{1}{2}$ and $\|\tvector{y}\| < \|\tvector{d}\| + \frac{1}{2} = \frac32$, so
\begin{equation*}
	\|\nabla F(\tvector{y}, t)\| \le 2 \|\nabla E(\tvector{p} + t \tvector{y})\|
	\,.
\end{equation*}
We now establish \eqref{eqn:Lojasiewicz_inequality_cone} for points $\tvector{p} + t \tvector{y}$ using \eqref{eqn:linearized_Lojasiewicz}:
\begin{align}
	\left|E(\tvector{p} + t \tvector{y}) - \lim_{s \to 0} E(\tvector{p} + s \tvector{d})\right|^{1 - \theta}
		=&	|F(\tvector{y}, t) - F(\tvector{d}, 0)|^{1 - \theta} \notag
\\
		\le&	\frac{k}{2} \|\nabla F(\tvector{y}, t)\| \notag
\\
		\le&	k \|\nabla E(\tvector{p} + t \tvector{y})\|
	\,.
	\label{eqn:pre_lojasiewicz}
\end{align}
Because this holds, with identical constants, for all $\begin{bmatrix}\tvector{y} \\ t \end{bmatrix} \in V_*$, we may establish \eqref{eqn:Lojasiewicz_inequality_cone} on a subset of $\mathbb{R}^m$. Define the cone $U_{\bowtie}$ by
\begin{align*}
	U_{\bowtie}	&=	\left\{\ \tvector{p} + t \tvector{y} \left| \begin{bmatrix}\tvector{y} \\ t \end{bmatrix} \in V_* \right\}\right. \subset \mathbb{R}^m
	\,.
\end{align*}
Note that $U_{\bowtie}$ is open and satisfies the requirements of the theorem. By \eqref{eqn:pre_lojasiewicz}, for any $\tvector{x} \in U_{\bowtie}$,
\begin{equation*}
	\left|E(\tvector{x}) - \lim_{s \to 0} E(\tvector{p} + s \tvector{d})\right|^{1 - \theta}
		\le	k \|\nabla E(\tvector{x})\|
	\,.
\end{equation*}
This completes the proof.
\end{proof}

Though we have shown that a generalized {\L}ojasiewicz gradient inequality holds in cones near the discontinuities of bounded rational functions, it is important to note that these cones are not neighborhoods of the discontinuities.
Before this theorem may be used to show convergence, it must be shown that a sequence approaching a discontinuity remains within a single cone, or within a finite union of cones.

\subsection{A {\L}ojasiewicz inequality for sequences approaching singularities}	\label{sec:loj_seq_tail}
This section proves conditions under which sequences remain within sets such as those produced by \cref{thm:Lojasiewicz_cones}.
This is broken into \cref{thm:union_of_Lojasiewicz,thm:funnel} and~\cref{thm:Lojasiewicz_on_sequence}.
\Cref{thm:funnel} shows that a continuous function on a product space may be used as a funnel, guiding a sequence into an open subset of one of its factor spaces, such as a set provided by \cref{thm:Lojasiewicz_cones}.
If one set provided by \cref{thm:Lojasiewicz_cones} is insufficient to capture the full behavior of a sequence, \cref{thm:union_of_Lojasiewicz} allows one to form the union of multiple such sets.
Finally, \cref{thm:Lojasiewicz_on_sequence} constructs a set on which the generalized {\L}ojasiewicz gradient inequality holds, and provides conditions under which a sequence will be funneled into the constructed set.

\begin{lemma} \label{thm:union_of_Lojasiewicz}
Let \( U_1, \dots, U_m \) be subsets of \( \mathbb{R}^n \), and let the function \( f : \bigcup_{i=1}^{m} U_i \to \mathbb{R} \) be bounded and differentiable.
If there exist constants \( {L \in \mathbb{R}} \), \({\theta_1, \dots, \theta_m \in (0, \frac12]} \), and \( {c_1, \dots, c_m \in \mathbb{R}} \) such~that, for~all \( {\tvector{x} \in U_i} \),
	\[	|f(\tvector{x}) - L|^{1 - \theta_i}	\le	c_i \| \nabla f(\tvector{x})\|	\,,	\]
then there exist \( \theta_* \in (0, \frac12] \) and \( c_* \in \mathbb{R} \) such that, for~all \( \tvector{x} \in \bigcup_{i=1}^m U_i \),
	\[	|f(\tvector{x}) - L|^{1 - \theta_*}	\le	c_* \| \nabla f(\tvector{x})\|	\,.	\]
\end{lemma}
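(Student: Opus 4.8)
The plan is to combine the $m$ individual inequalities into a single one by choosing $\theta_*$ small enough that the $|f-L|^{1-\theta_*}$ term is dominated by each $|f-L|^{1-\theta_i}$ (up to a constant) on the region where $|f-L|$ is small, and by handling the region where $|f-L|$ is bounded away from zero separately using boundedness of $f$. First I would reduce to the case $m=2$; the general case then follows by an obvious induction on $m$, re-applying the two-set statement to $\bigl(\bigcup_{i=1}^{m-1}U_i\bigr)\cup U_m$. For $m=2$, set $\theta_* = \min\{\theta_1,\theta_2\}$, and I claim suitable $c_*$ exists.

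The key observation is the following elementary fact about real numbers: for any $a \ge 0$ and exponents $0 < \alpha \le \beta \le 1$, one has $a^\beta \le a^\alpha$ when $a \le 1$, and $a^\beta \le a^\alpha \cdot a^{\beta-\alpha} \le a^\alpha \cdot S^{\beta-\alpha}$ when $0 \le a \le S$. Since $f$ is bounded and $L$ is fixed, there is a finite $S$ with $|f(\tvector{x}) - L| \le S$ for all $\tvector{x} \in U_1 \cup U_2$; enlarging $S$ we may assume $S \ge 1$. Then for $\tvector{x} \in U_i$ (either $i$), writing $a = |f(\tvector{x}) - L|$ and using $1-\theta_* \ge 1-\theta_i$,
\[
	|f(\tvector{x}) - L|^{1-\theta_*} \;=\; a^{1-\theta_*} \;=\; a^{1-\theta_i}\, a^{\theta_i - \theta_*} \;\le\; a^{1-\theta_i}\, S^{\theta_i - \theta_*} \;\le\; S\, a^{1-\theta_i} \;\le\; S\, c_i \,\|\nabla f(\tvector{x})\|\,,
\]
where $a^{\theta_i - \theta_*} \le S^{\theta_i - \theta_*} \le S$ because $\theta_i - \theta_* \ge 0$, $a \le S$, and $S \ge 1$. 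Taking $c_* = S \max\{c_1, c_2\}$ (and then $c_* = S\max_i c_i$ in the $m$-set case) gives the desired inequality uniformly on $U_1 \cup U_2$, since every point of the union lies in some $U_i$.

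I do not expect a genuine obstacle here; the only mild subtlety is making sure the bound on $a^{\theta_i - \theta_*}$ is handled correctly when $a$ is large versus small — which is exactly why boundedness of $f$ (hence of $|f-L|$) is invoked, and why one normalizes $S \ge 1$ so that $S^{\theta_i - \theta_*} \le S$ regardless of the sign of the small exponent. One should also note the degenerate cases: if $\bigcup U_i$ is empty the statement is vacuous, and if some $|f - L|$ is identically zero on all of the union then any $\theta_*, c_*$ work; neither case causes trouble. The induction step is immediate since the hypothesis of the two-set lemma — a single common constant $L$, exponents in $(0,\tfrac12]$, and per-set constants — is reproduced by its conclusion.
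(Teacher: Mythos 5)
Your proposal is correct and is essentially the paper's own argument: take $\theta_*=\min_i\theta_i$ and absorb the factor $|f(\tvector{x})-L|^{\theta_i-\theta_*}$ into the constant using the bound $\sup|f-L|<\infty$, the paper choosing $c_*=\max_i\bigl(M^{\theta_i-\theta_*}c_i\bigr)$ where you use the slightly cruder $c_*=S\max_i c_i$ with $S\ge 1$. The reduction to $m=2$ plus induction is harmless but unnecessary, since the uniform bound handles all $m$ sets at once.
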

\begin{proof}
Let \( M = \sup_{\tvector{x} \in \bigcup_{i=1}^m U_i} |f(\tvector{x}) - L| \). Because \( f \) is bounded, \( M < \infty \).
Let \( \theta_* = \min_i \theta_i \). Then, for any \( \tvector{x} \in U_i \),
	\[	|f(\tvector{x}) - L|^{1 - \theta_*}	\quad\le\quad	M^{\theta_i - \theta_*} |f(\tvector{x}) - L|^{1 - \theta_i} \quad\le\quad M^{\theta_i - \theta_*} c_i \| \nabla f(\tvector{x})\|	\,.	\]
Selecting \( c_* = \max_i ( M^{\theta_i - \theta_*} c_i ) \) completes the proof.
\end{proof}

In the proof of \cref{thm:Lojasiewicz_on_sequence}, we will parameterize elements of a sequence \( \{ \tvector{x}_k \}_{k=1}^{\infty} \) as pairs of directions \( \tvector{x}_k / \| \tvector{x}_k \| \) and distances \( \| \tvector{x}_k \| \).
Using this parameterization, we must show that if \( \| \tvector{x}_k \| \) is sufficiently small, then the direction \( \tvector{x}_k / \| \tvector{x}_k \| \) must be within a specified set.
That argument is greatly simplified by \cref{thm:funnel}, which states formally an intuitive property of continuous functions and direct~products of compact sets illustrated by \cref{fig:funnel}.

\begin{figure}[h]
	\centering
	\begin{tikzpicture}[scale=0.6]
	
	\filldraw[blue,opacity=0.2,rounded corners=28.8] (-4, -2.5) -- (-4,2) -- (4,2) -- (4,-2.5) -- cycle;
	\draw[blue,rounded corners=28.8, very thick] (-4, -2.5) -- (-4,2) -- (4,2) -- (4,-2.5) -- cycle;
	\filldraw[white] (-3, -1) -- (-3,1) -- (3,1) -- (3,-1) -- cycle;
	\filldraw[green!60!black, opacity=0.5] (-3, -1) -- (-3,1) -- (3,1) -- (3,-1) -- cycle;
	\draw[green!60!black, very thick] (-3, -1) -- (-3,1) -- (3,1) -- (3,-1) -- cycle;
	\draw (0,0) node { \( K_1 \times K_2 \) };
	\draw[dashed,very  thick] (-3.5, -1.5) -- (-3.5,1.5) -- (3.5,1.5) -- (3.5,-1.5) -- cycle;
	\draw[dotted,thick] (-3,0) -- ++(-0.25,0) node[above] { \( \epsilon \) } -- ++(-0.25,0);
	\draw[dotted,thick] (3.5,0) -- ++(-0.25,0) node[above] { \( \epsilon \) } -- ++(-0.25,0);
	
	\draw[dotted,thick] (0,1.5) -- ++(0,-0.25) node[right] { \( \epsilon \) } -- ++(0,-0.25);
	\draw[dotted,thick] (0,-1) -- ++(0,-0.25) node[right] { \( \epsilon \) } -- ++(0,-0.25);
	
	\draw[blue!50!black] (0,-2) node { \( f(x_1, x_2) \ge \epsilon \) };
	\end{tikzpicture}
	\caption{Illustration of \cref{thm:funnel}}
	\label{fig:funnel}
\end{figure}

\begin{lemma}	\label{thm:funnel}
Given metric spaces \( X_1, X_2 \), compact sets \( K_1 \subset X_1 \) and \( K_2 \subset X_2 \), and a continuous function \( f : X_1 \times X_2 \to \mathbb{R} \), if \( f|_{K_1 \times K_2} > 0 \), then
there~exists \( \epsilon > 0 \) such that, for~each \(x_1 \in X_1\) and \(x_2 \in X_2\), at least one of the statements
\begin{enumerate}
\item \( f(x_1, x_2) < \epsilon \),
\item \( \operatorname{d}(x_1, K_1) < \epsilon \),
\item \( \operatorname{d}(x_2, K_2) < \epsilon \)
\end{enumerate}
is false.
\end{lemma}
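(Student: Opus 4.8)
The plan is to establish the equivalent statement that there is an $\epsilon>0$ for which no pair $(x_1,x_2)$ can satisfy all three of $f(x_1,x_2)<\epsilon$, $\operatorname{d}(x_1,K_1)<\epsilon$, and $\operatorname{d}(x_2,K_2)<\epsilon$ simultaneously; that is, an $\epsilon$ such that $\operatorname{d}(x_1,K_1)<\epsilon$ together with $\operatorname{d}(x_2,K_2)<\epsilon$ force $f(x_1,x_2)\ge\epsilon$. This is the assertion that the positivity of $f$ on the compact set $K_1\times K_2$ persists, uniformly, on a ``product neighborhood'' of $K_1\times K_2$, and I would prove it by a compactness argument run by contradiction, rather than by first extracting a minimum of $f$ over $K_1\times K_2$ (the sequential argument handles positivity and the neighborhood simultaneously).

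First I would suppose no such $\epsilon$ exists. Then for every $n\in\mathbb{N}$ there is a pair $(x_1^n,x_2^n)$ with $\operatorname{d}(x_1^n,K_1)<\frac1n$, $\operatorname{d}(x_2^n,K_2)<\frac1n$, and $f(x_1^n,x_2^n)<\frac1n$. Since $\operatorname{d}(x_i^n,K_i)<\frac1n$, for each $i\in\{1,2\}$ and each $n$ I may choose a point $p_i^n\in K_i$ with $\operatorname{d}(x_i^n,p_i^n)<\frac1n$. Using that $K_1$ and $K_2$ are compact, hence sequentially compact, I pass first to a subsequence along which $p_1^n$ converges to some $p_1\in K_1$, and then to a further subsequence along which $p_2^n$ converges to some $p_2\in K_2$; along this subsequence the triangle inequality gives $x_i^n\to p_i$ for $i=1,2$, so $(x_1^n,x_2^n)\to(p_1,p_2)$ in the product topology on $X_1\times X_2$. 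Continuity of $f$ then yields $f(x_1^n,x_2^n)\to f(p_1,p_2)$, while $f(x_1^n,x_2^n)<\frac1n$ forces $f(p_1,p_2)\le 0$, contradicting $f|_{K_1\times K_2}>0$ since $(p_1,p_2)\in K_1\times K_2$. This produces the desired $\epsilon$.

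Finally I would read off the statement of the lemma: for any $x_1\in X_1$ and $x_2\in X_2$, if both $\operatorname{d}(x_1,K_1)<\epsilon$ and $\operatorname{d}(x_2,K_2)<\epsilon$ hold then, by the choice of $\epsilon$, $f(x_1,x_2)\ge\epsilon$, so statement~(1) is false; otherwise (2) or (3) is false; in every case at least one of the three statements fails. The only points needing care are bookkeeping: that a metric product carries the topology in which coordinatewise convergence coincides with convergence in $X_1\times X_2$, that ``compact'' may be used as ``sequentially compact'' here, that $\operatorname{d}(x_i^n,K_i)<\frac1n$ supplies a genuinely nearby point of $K_i$ rather than an exact nearest point, and that the two subsequence extractions are carried out one after the other so that $p_1^n$ and $p_2^n$ converge along a common subsequence. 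I do not expect a substantive obstacle; the entire content is the compactness of $K_1\times K_2$ combined with continuity of $f$.
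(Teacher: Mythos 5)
Your proof is correct, and it takes a genuinely different route from the paper's. The paper argues directly and somewhat constructively: it covers \(K_1 \times K_2\) by product neighborhoods on which \(f>0\), extracts a finite subcover, takes \(\epsilon_1\) as half the \(\operatorname{d}_\infty\)-distance from the complement of that union to \(K_1\times K_2\), and then takes \(\epsilon_2\) as the minimum of \(f\) over the closed \(\epsilon_1\)-thickening \(K_{\epsilon_1}\) of \(K_1\times K_2\), setting \(\epsilon=\min\{\epsilon_1,\epsilon_2\}\); this identifies \(\epsilon\) explicitly but leans on the claim that the relevant closures and the thickened set \(K_{\epsilon_1}\) are compact ``by the Heine--Borel property,'' which is not valid in arbitrary metric spaces (a closed \(\epsilon\)-neighborhood of a compact set need not be compact), though it is harmless in the paper's application where everything lives in \(\mathbb{R}^n\). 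Your contradiction argument---extract \(p_i^n\in K_i\) within \(1/n\) of \(x_i^n\), pass to successive subsequences by sequential compactness of \(K_1\) and \(K_2\), conclude \((x_1^n,x_2^n)\to(p_1,p_2)\in K_1\times K_2\) and hence \(f(p_1,p_2)\le 0\)---uses only sequential compactness and continuity, so it proves the lemma as stated for general metric spaces and in fact repairs the gap in the paper's argument; the trade-off is that your \(\epsilon\) is non-constructive, whereas the paper's (modulo the compactness issue) comes with an explicit description. Your bookkeeping points (strict inequality \(\operatorname{d}(x_i^n,K_i)<1/n\) supplying a nearby point without needing a nearest point, coordinatewise convergence agreeing with product convergence, and the two subsequence extractions being nested) are exactly the right ones and are handled correctly.
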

\begin{proof}
Note that the product topology on \( X_1 \times X_2 \) is induced by the metric defined by
	\[	\operatorname{d}_{\infty} ((x_1, y_1), (x_2, y_2))	=	\max \{ \operatorname{d} (x_1, x_2), \operatorname{d} (y_1, y_2) \}	\,.	\]
Because \( f \) is continuous, each \( (x,y) \in K_1 \times K_2 \) admits an open neighborhood
	\[	N^{x,y}_{q,r}	=	\left\{\, (z,w) \,|\, \operatorname{d}(x,z) < q\,	\quad\text{and}\quad	\operatorname{d}(y,w) < r \,\right\}	\]
such~that \( f|_{N^{x,y}_{q,r}} > 0 \).
These sets \( N^{x,y}_{q,r} \) form an open cover of \( K_1 \times K_2 \). By Tychonoff's theorem, \( K_1 \times K_2 \) is compact, so the cover admits a finite subcover \( \{ N^{x_i,y_i}_{q_i,r_i} \}_{i = 1}^{n} \). The closures of the sets in this subcover are compact, by the Heine-Borel property, so their union is also compact.

The set \( N^C = (X_1 \times X_2) \setminus ( \bigcup_{i=1}^n N^{x_i,y_i}_{q_i,r_i} ) \) is closed, so \( \epsilon_1 = \frac{1}{2} \operatorname{d}_{\infty}(N^C, K_1 \times K_2) \) is positive.
Define
	\[	K_{\epsilon_1}	=	\{ (x_1, x_2) \in X_1 \times X_2 | \operatorname{d}_{\infty}((x_1, x_2), K_1 \times K_2) \le \epsilon_1 \}	\,,	\]
and note that \( K_{\epsilon_1} \subset \bigcup_{i=1}^n N^{x_i,y_i}_{q_i,r_i} \). As a closed subset of the union of the closures of  \( \{ N^{x_i,y_i}_{q_i,r_i} \}_{i = 1}^{n} \), the set \( K_{\epsilon_1} \) is compact. Thus \( f \) attains a lower bound \( \epsilon_2 > 0 \) on \( K_{\epsilon_1} \). Define \( \epsilon = \min \{ \epsilon_1, \epsilon_2 \} \).

Given a point \( (x_1, x_2) \in X_1 \times X_2 \), the statements \( \operatorname{d}(x_1, K_1) < \epsilon \) and \( \operatorname{d}(x_2, K_2) < \epsilon \) together imply \( \operatorname{d}_{\infty}((x_1, x_2), K_1 \times K_2) < \epsilon_1 \). This in turn implies \( (x_1, x_2) \in K_{\epsilon_1} \), so \( f(x_1, x_2) \ge \epsilon_2 \ge \epsilon \).
\end{proof}

With the lemmas above, we can formalize a statement that certain sequences are funneled into cones on which the {\L}ojasiewicz inequality holds.
Though we have a statement that the {\L}ojasiewicz inequality holds on cones adjacent to essential singularities, infinitely many such cones may be required to cover a deleted neighborhood of a given point.
\Cref{thm:union_of_Lojasiewicz} can be applied only to finitely many cones, and these cones must have an identical limiting value \(L\).
We can ensure a finite union by imposing a condition which induces a compact set covered by cones, such as condition \eqref{eqn:linear_approach} below.

\begin{theorem}	\label{thm:Lojasiewicz_on_sequence}
Let \( f \) be a bounded multivariate rational function on \(\mathbb{R}^n\), and let \( \{\tvector{x}_k\}_{k=1}^{\infty} \subset \mathbb{R}^n \) be a sequence such~that \( \{f(\tvector{x}_k)\}_{k=1}^{\infty} \) is monotonic.
Suppose that \( \tvector{x}^* \) is a cluster point of the sequence \( \{\tvector{x}_k \}_{k=1}^{\infty} \) and there~exists a closed set \( V \subset \mathcal{O}_{\tvector{x}^*} \) such~that for~all sufficiently large \( k \),
\begin{equation}	\label{eqn:linear_approach}
	\frac{\tvector{x}_k - \tvector{x}^*}{\|\tvector{x}_k - \tvector{x}^*\|} \in V \,.
\end{equation}
Then there exist an open neighborhood \( U \) of \( \tvector{x}^* \) and constants \( \theta \in (0, 1/2] \) and \( c \in \mathbb{R} \) such that if \( k \) is sufficiently large and if \( \tvector{x}_k \in U \), then
\[
    \left| f(\tvector{x}_k) - \lim_{i \to \infty} f(\tvector{x}_i) \right|^{1-\theta} \le c\|\nabla f(\tvector{x}_k)\|\,.
\]
\end{theorem}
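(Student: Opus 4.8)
The plan is to reduce the claim to a finite-union application of \cref{thm:Lojasiewicz_cones} and \cref{thm:union_of_Lojasiewicz}, with \cref{thm:funnel} supplying the trap that keeps the tail of the sequence inside the chosen cones. First I would fix notation: write $L = \lim_{i\to\infty} f(\tvector{x}_i)$, which exists because $\{f(\tvector{x}_k)\}$ is monotonic and $f$ is bounded. For each direction $\tvector{d}\in V$ (which lies in $\mathcal{O}_{\tvector{x}^*}\setminus\{\tvector0\}$, since $\tvector 0 \notin \mathcal{O}_{\tvector{x}^*}$ and $V$ consists of unit vectors by \eqref{eqn:linear_approach}), \cref{thm:Lojasiewicz_cones} with $\tvector p = \tvector{x}^*$ produces an open cone $U_{\bowtie}^{\tvector d}$, exponents $\theta_{\tvector d}\in(0,\tfrac12]$, constants $k_{\tvector d}>0$, and a value $\lim_{s\to0}E(\tvector{x}^* + s\tvector d)$ appearing in \eqref{eqn:Lojasiewicz_inequality_cone}. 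The first thing to verify is that this limiting value equals $L$ for every $\tvector d$ that is actually approached by the sequence: since $\tvector{x}^*$ is a cluster point and infinitely many $\tvector{x}_k$ lie near $\tvector{x}^* + s\tvector d$ for small $s$ (using property~1 of $U_{\bowtie}^{\tvector d}$ together with \eqref{eqn:linear_approach} and a subsequence converging in direction to $\tvector d$), continuity of $f$ along the ray and convergence $f(\tvector{x}_k)\to L$ force $\lim_{s\to0}E(\tvector{x}^*+s\tvector d) = L$. This is the step I expect to require the most care, because it must be argued only for those directions in the closure of the set of directions realized by the tail, and one has to be sure the cones can be shrunk so their limiting value is exactly $L$ rather than merely nearby.

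Next I would extract a finite subcover. The cones $U_{\bowtie}^{\tvector d}$, restricted to the unit sphere via the map $\tvector y \mapsto (\tvector y - \tvector{x}^*)/\|\tvector y - \tvector{x}^*\|$, give an open cover of the compact set $V$ (here I use property~2 of $U_{\bowtie}^{\tvector d}$: the cone is invariant under scaling toward $\tvector{x}^*$, so its ``angular slice'' is an open neighborhood of $\tvector d$ on the sphere). Compactness of $V$ yields finitely many directions $\tvector d_1,\dots,\tvector d_m$ whose cones $U_1 := U_{\bowtie}^{\tvector d_1},\dots,U_m := U_{\bowtie}^{\tvector d_m}$ cover all directions in $V$ — more precisely, cover a closed neighborhood $V'$ of $V$ on the sphere. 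Each $U_i$ carries the inequality $|E(\tvector x) - L|^{1-\theta_i} \le k_i \|\nabla E(\tvector x)\|$ for $\tvector x \in \operatorname{domain}(E)\cap U_i$, with the \emph{same} constant $L$ by the previous paragraph. \Cref{thm:union_of_Lojasiewicz}, applied with this common $L$, then furnishes $\theta \in (0,\tfrac12]$ and $c\in\mathbb R$ with $|f(\tvector x) - L|^{1-\theta}\le c\|\nabla f(\tvector x)\|$ for every $\tvector x \in \bigcup_{i=1}^m U_i$ lying in the domain of $f$.

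It remains to produce the open neighborhood $U$ of $\tvector{x}^*$ such that, for large $k$, $\tvector{x}_k\in U$ implies $\tvector{x}_k \in \bigcup_i U_i$; then the displayed inequality at $\tvector{x}_k$ follows immediately from the previous paragraph (note $\tvector{x}_k$ is automatically in the domain of $f$). This is where \cref{thm:funnel} enters. I would set one factor space to be the sphere of directions and the other to be the radial coordinate $\|\tvector{x} - \tvector{x}^*\|\in[0,\infty)$; take $K_1$ to be the closed angular region $V'$ minus the (open) angular slices of $U_1,\dots,U_m$ — wait, more cleanly: use \cref{thm:funnel} in the contrapositive form to say that there is $\epsilon>0$ so that any point whose direction is within $\epsilon$ of $V$ and whose radius is less than $\epsilon$ must have its direction lying strictly inside one of the angular slices, hence the point itself lies in some $U_i$. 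Concretely, let $g$ be a continuous function on (sphere)$\times[0,\infty)$ that is positive exactly on $V \times\{0\}$ and vanishes off the union of angular slices at radius $0$; apply \cref{thm:funnel}; obtain $\epsilon$; then set $U = \{\tvector x : \|\tvector x - \tvector{x}^*\| < \epsilon\}$. For $k$ large enough, \eqref{eqn:linear_approach} guarantees $(\tvector{x}_k-\tvector{x}^*)/\|\tvector{x}_k-\tvector{x}^*\| \in V$, so $\operatorname{d}(\cdot,K_1)$ is small and $\operatorname{d}(\cdot,K_2)$ is small once $\tvector{x}_k\in U$, forcing the third funnel alternative to fail, i.e. $\tvector{x}_k$ lies in an angular slice and hence in some $U_i$. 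Combining with the union inequality and $L = \lim_i f(\tvector{x}_i)$ completes the proof. The main obstacle, as noted, is the bookkeeping needed to guarantee that every relevant cone can be chosen with limiting value exactly $L$; the funneling and the finite-subcover steps are then routine compactness arguments.
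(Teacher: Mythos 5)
There is a genuine gap, and it is exactly at the point you flagged as delicate. Your plan in the second paragraph is to cover \emph{all} of \(V\) (indeed a closed spherical neighborhood \(V'\) of \(V\)) by cones from \cref{thm:Lojasiewicz_cones} and to claim that every one of these cones can be taken with limiting value exactly \(L=\lim_i f(\tvector{x}_i)\). That claim is false in general: for a fixed direction \(\tvector{d}\in\mathcal{O}_{\tvector{x}^*}\), the cone inequality \eqref{eqn:Lojasiewicz_inequality_cone} comes with the radial limit \(\lim_{s\to 0}E(\tvector{x}^*+s\tvector{d})=F(\tvector{d},0)\), and near an essential singularity the function \(\tvector{d}\mapsto F(\tvector{d},0)\) is typically non-constant on \(V\) (e.g.\ \((x,y)\mapsto (y^2-x^2)/(x^2+y^2)\) with \(V\) the whole unit circle). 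Hypothesis \eqref{eqn:linear_approach} only places the normalized increments in \(V\); it does not make the radial limit equal \(L\) along every direction of \(V\). Your first-paragraph argument (a subsequence of directions clustering at \(\tvector{d}\) plus continuity of \(F\) forces \(F(\tvector{d},0)=L\)) is sound, but it only applies to the \emph{cluster directions} of the sequence, which form a possibly much smaller compact subset of \(V\). Consequently the finite-subcover step over all of \(V'\) produces cones whose constants \(L\) disagree, and \cref{thm:union_of_Lojasiewicz} — which requires one common \(L\) — cannot be applied. The paper's proof avoids this by defining the compact set \(K=V\cap\{\tvector{u}:\liminf_{\tvector{z}\to\tvector{u}}|F(\tvector{z},0)-L|=0\}\), covering only \(K\) by cones (all with value \(L\)), and taking the finite subcover there.

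Once you only cover \(K\), a second ingredient is needed that your sketch does not supply: a reason why the tail of the sequence has directions within distance \(\delta\) of \(K\) rather than merely in \(V\). This is where \cref{thm:funnel} is actually used in the paper, and in a different way than you propose: with \(K_1=V_\delta=\{\tvector{u}\in V:\operatorname{d}(\tvector{u},K)\ge\delta\}\), \(K_2=\{0\}\), and the continuous function \(|F-L|\), which is strictly positive on \(V_\delta\times\{0\}\). The funnel lemma then says that if \(\|\tvector{x}_k-\tvector{x}^*\|\) is small and \(|f(\tvector{x}_k)-L|=|F(\tvector{x}_k/\|\tvector{x}_k\|,\|\tvector{x}_k\|)-L|\) is small (which holds for large \(k\) since \(f(\tvector{x}_k)\to L\)), the direction cannot lie in \(V_\delta\), hence lies within \(\delta\) of \(K\) and, by the scale-invariance of the cones, \(\tvector{x}_k\) lies in the finite union \(\mathcal{U}\). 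Your version of the funnel step instead posits a function ``positive exactly on \(V\times\{0\}\)'' and presumes the angular slices already cover a neighborhood of \(V\); under that presumption the funnel lemma would be unnecessary, and without it your construction does not match the hypotheses of \cref{thm:funnel}. So the overall architecture (cones, union lemma, funnel) is right, but the proof as proposed fails at the covering step, and the mechanism that replaces it — monotone convergence of \(f(\tvector{x}_k)\) to \(L\) forcing the directions toward the level-\(L\) set \(K\) — is the missing idea.
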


\begin{proof}
If \( f(\tvector{x}^*) \) is defined, then \( f \) is real-analytic in an open neighborhood of \( \tvector{x}^* \), and the conclusion of the theorem follows directly from the {\L}ojasiewicz gradient inequality.

Assume instead that \( f(\tvector{x}^*) \) is undefined. Then \( \tvector{x}_k \ne \tvector{x}^* \)  for~all \( k \in \mathbb{N} \).

The sequence~\( \{f(\tvector{x}_k)\}_{k=1}^{\infty} \) is monotonic and bounded, so it admits a limit
\[
	L = \lim_{k \to \infty} f(\tvector{x}_k) \,.
\]
Without loss of generality, we may assume that \( \tvector{x}^* = \tvector{0} \) and that every \( \tvector{u} \in V \) has \( \|\tvector{u}\| = 1\). Define \( F(\tvector{x},t) = \lim_{s \to t} f \left( s \tvector{x} \right) \) and note that it is analytic on \( \mathcal{O}_{\tvector{x}^*} \times \mathbb{R} \).

Let \( K \subset \mathcal{O}_{\tvector{x}^*} \) be defined by
\[
	K = V \cap \left\{ \, \tvector{x} \in \mathbb{R}^n \; \left| \; \liminf_{\tvector{z} \to \tvector{x}} \left| F(\tvector{z},0) - L \right| = 0 \, \right.\right\} \,.
\]
As an intersection of a compact set and a closed set, \( K \) is compact. To show that \( K \) is non-empty, take a subsequence \( \{\tvector{x}_{k_i}\}_{i=1}^{\infty} \) such that \( \tvector{x}_{k_i} \to \tvector{x}^* \), 
and note that \( \left\{\left(\tvector{x}_{k_i} / \|\tvector{x}_{k_i}\|, \|\tvector{x}_{k_i}\|\right)\right\}_{i=1}^{\infty} \) admits a cluster point \( (\tvector{u}^*,0) \), for some \( \tvector{u}^* \in V \). 
Because \( F \) is continuous, \( F(\tvector{u}^*,0) = \lim_{i \to \infty} F(\tvector{x}_{k_i} / \|\tvector{x}_{k_i}\|, \|\tvector{x}_{k_i}\|) = L \), and thus \( \tvector{u}^* \in K \).
Further, we may use continuity of \( F \) to calculate the image \( F[K \times \{ 0 \}] = \{ L \} \).

Using \cref{thm:Lojasiewicz_cones}, create collections
\[ \begin{matrix}
	\mathcal{U}_{\tvector{u}} \underset{\mathrm{open}}{\subset} \mathbb{R}^n\,,
&\hspace{1in}&
	\Theta_{\tvector{u}} \in \left( 0, \frac12 \right]\,,
&\hspace{0.4in}\text{and}\hspace{0.4in}&
	\mathcal{C}_{\tvector{u}} \in \mathbb{R}
\end{matrix} \]
such that, for every \( \tvector{u} \in K \),
\[
	\tvector{y} \in \mathcal{U}_{\tvector{u}} \quad \implies \quad |f(\tvector{y}) - L|^{1-\Theta_{\tvector{u}}} \le \mathcal{C}_{\tvector{u}} \|\nabla f(\tvector{y})\|	\,.
\]
By their definition, each \( \mathcal{U}_{\tvector{u}} \) admits an open subset \( \mathcal{V}_{\tvector{u}} \subset \mathcal{U}_{\tvector{u}} \) and a real number \( \epsilon_{1,\tvector{u}} \in (0, 1] \) such that
\begin{equation}	\label{eqn:tiny_cone}
	0 < |t| \le \epsilon_{1,\tvector{u}}	\quad\wedge\quad	\tvector{y} \in \mathcal{V}_{\tvector{u}}	\quad\implies\quad	t \frac{\tvector{y}}{\left\| \tvector{y} \right\|} \in \mathcal{V}_{\tvector{u}}	\,,
\end{equation}
and such that there~exists \(t > 0\) such~that \(t \tvector{u} \in \mathcal{V}_{\tvector{u}}\).
For each~\( \tvector{u} \in K \), let \( \mathcal{V}'_{\tvector{u}} = \left\{ \left. \frac{2}{\epsilon_{1,\tvector{u}}}\tvector{y} \,\right|\, \tvector{y} \in \mathcal{V}_{\tvector{u}} \right\} \).
Note that \(\|\tvector{u}\| = 1\) by assumption on \(V\), and thus that \(\frac{\epsilon_{1,\tvector{u}}}{2} \in \mathcal{V}_{\tvector{u}}\); it follows that \(\tvector{u} \in \mathcal{V}'_{\tvector{u}}\).
The collection~\( \{ \mathcal{V}'_{\tvector{u}} \}_{\tvector{u} \in K} \) is then an open cover of \( K \), and thus admits a finite subcover \( \{ \mathcal{V}'_{\tvector{u}_i} \}_{i=1}^{m} \), where \( \{ \tvector{u}_i \}_{i=1}^{m} \subset K \).
Let
\[	\begin{aligned}
	\epsilon_1	\quad&=\quad	\min_{i = 1, \dots, m} \epsilon_{1,\tvector{u}_i}	\,,
\\
	\mathcal{U}	\quad&=\quad	\bigcup_{i=1}^{m} \mathcal{U}_{\tvector{u}_i}	\,.
\end{aligned}	\]
By \cref{thm:union_of_Lojasiewicz}, there exist $\theta \in (0, 1/2]$ and $c \in \mathbb{R}$ such that
\[
	\tvector{y} \in \mathcal{U}	\quad\implies\quad	|f(\tvector{y}) - L|^{1-\theta} \le c \|\nabla f(\tvector{y})\| \,.
\]

Note that \( \epsilon_1 K = \{\, \epsilon_1 \tvector{u} \,|\, \tvector{u} \in K \,\} \subset \bigcup_{i=1}^{n} \mathcal{V}_{\tvector{u}_i} \subset \mathcal{U} \), and that the distance between a compact set and a closed set is positive if the sets are disjoint. Thus, we may define
\begin{equation}	\label{eqn:def_delta}
	\delta	\quad=\quad	\frac{\operatorname{d}\left(\epsilon_1 K,\, \mathbb{R}^n \setminus \mathcal{U} \right)}{\epsilon_1} \quad>\quad 0	\,.
\end{equation}
Recalling that \(\tvector{u} \in K \implies \|\tvector{u}\| = 1\), and applying \eqref{eqn:tiny_cone} and \eqref{eqn:def_delta} gives that if \( \| \tvector{x}_k \| < \epsilon_1 \) and \( \operatorname{d}(\tvector{x}_k / \|\tvector{x}_k\|, K) < \delta \), then \( \tvector{x}_k \in \mathcal{U} \).

Let \( V_{\delta} = \{\, \tvector{u} \in V \,|\, \|\tvector{u}\| = 1	\wedge	\operatorname{d}(\tvector{u},K) \ge \delta \} \). By \eqref{eqn:linear_approach},
\[
	\frac{\tvector{x}_k}{\|\tvector{x}_k\|} \notin V_{\delta} \iff \operatorname{d}\left(\frac{\tvector{x}_k}{\|\tvector{x}_k\|}, K\right) < \delta \,.
\]

\( V_{\delta} \) is compact, and \( |F - L| > 0\) on \( V_{\delta} \times \{ 0 \}\).
By \cref{thm:funnel}, there~exists \(\epsilon_2 > 0\) such~that \( \| \tvector{x}_k \| < \epsilon_2 \) and \( \left|F\left(\frac{\tvector{x}_k}{\|\tvector{x}_k\|}, \|\tvector{x}_k\|\right) - L\right| < \epsilon_2 \) together imply \( \frac{\tvector{x}_k}{\|\tvector{x}_k\|} \notin V_{\delta} \).
Note that \( F\left(\frac{\tvector{x}_k}{\|\tvector{x}_k\|}, \|\tvector{x}_k\|\right) = f(\tvector{x}_k) \).

If \( k \) is large enough that \( |f(\tvector{x}_k) - L| < \epsilon_2 \) and if \( \| \tvector{x}_k \| < \min \{ \epsilon_1, \epsilon_2 \} \), then \( \operatorname{d}\left(\frac{\tvector{x}_k}{\|\tvector{x}_k\|}, K\right) < \delta \), and \( \tvector{x}_k \in \mathcal{U} \).
Selecting \( U = \{ \tvector{v} \in \mathbb{R}^n \,|\, \| \tvector{v} - \tvector{x}^* \| < \min \{ \epsilon_1, \epsilon_2 \} \} \) completes the proof.
\end{proof}

With the possible exception of \eqref{eqn:linear_approach}, the conditions of \cref{thm:Lojasiewicz_on_sequence} impose no great burden.
Any sequence produced by a hill-climbing algorithm will necessarily have monotonic \( \{ f(\tvector{x}_k) \}_{k = 1}^{\infty} \).

\begin{proposition}    \label{thm:loj_assumption_4_forms}
The set \( \mathcal{O}_{\tvector{x}^*} \) defined in \cref{def:safe_directions} contains every cluster point of the sequence
\( \left\{ \frac{\tvector{x}_k - \tvector{x}^*}{\|\tvector{x}_k - \tvector{x}^*\|} \right\}_{k=1}^{\infty} \)
if and only~if there exists a closed set \( V \subset \mathcal{O}_{\tvector{x}^*} \) such~that \( \frac{\tvector{x}_k - \tvector{x}^*}{\|\tvector{x}_k - \tvector{x}^*\|} \in V \) for~all sufficiently large \( k \).
\end{proposition}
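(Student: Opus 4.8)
The plan is to prove the two implications separately; both are elementary point-set topology, the only mild subtlety being a standard compactness fact relating a bounded sequence to its set of cluster points. Write \( \tvector{u}_k = \frac{\tvector{x}_k - \tvector{x}^*}{\|\tvector{x}_k - \tvector{x}^*\|} \), which is defined for all large \( k \) in the cases of interest, and note each \( \tvector{u}_k \) lies on the compact unit sphere in \( \mathbb{R}^n \). The ``if'' direction is then immediate: if \( V \subset \mathcal{O}_{\tvector{x}^*} \) is closed with \( \tvector{u}_k \in V \) for all \( k \ge N \), then any cluster point of \( \{\tvector{u}_k\} \) is a cluster point of the tail \( \{\tvector{u}_k\}_{k \ge N} \subset V \), and since \( V \) is closed that cluster point lies in \( V \subset \mathcal{O}_{\tvector{x}^*} \).

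For the ``only if'' direction, let \( L \) be the set of cluster points of \( \{\tvector{u}_k\} \); it is nonempty (Bolzano--Weierstrass on the unit sphere), closed, and bounded, hence compact, and by hypothesis \( L \subset \mathcal{O}_{\tvector{x}^*} \). The key point is that \( \mathcal{O}_{\tvector{x}^*} \) is \emph{open}: by \cref{def:safe_directions} it is the complement of the zero set of the polynomial \( f_{n_{\mathrm{min}}} \). Since \( L \) is compact and disjoint from the closed set \( \mathbb{R}^n \setminus \mathcal{O}_{\tvector{x}^*} \), the distance \( \operatorname{d}(L, \mathbb{R}^n \setminus \mathcal{O}_{\tvector{x}^*}) \) is positive (interpreted as \( +\infty \) when \( \mathcal{O}_{\tvector{x}^*} = \mathbb{R}^n \)); fix \( r \) strictly between \( 0 \) and that distance and set \( V = \{\, \tvector{x} \in \mathbb{R}^n : \operatorname{d}(\tvector{x}, L) \le r \,\} \). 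Then \( V \) is closed and, by choice of \( r \), contained in \( \mathcal{O}_{\tvector{x}^*} \).

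It remains to show \( \tvector{u}_k \in V \) for all large \( k \), i.e.\ that \( \operatorname{d}(\tvector{u}_k, L) \le r \) eventually; this is the one step needing an actual argument, though a routine one. If \( \operatorname{d}(\tvector{u}_k, L) > r \) held for infinitely many \( k \), those terms would form a subsequence in the compact set \( \{\, \tvector{u} \in \mathbb{R}^n : \|\tvector{u}\| = 1,\ \operatorname{d}(\tvector{u}, L) \ge r \,\} \), which would then have a cluster point \( \tvector{u}_* \) satisfying \( \operatorname{d}(\tvector{u}_*, L) \ge r \); but \( \tvector{u}_* \) would also be a cluster point of \( \{\tvector{u}_k\} \), forcing \( \tvector{u}_* \in L \), a contradiction. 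Hence \( \tvector{u}_k \in V \) for all sufficiently large \( k \). I anticipate no genuine obstacle --- the proposition merely recasts \eqref{eqn:linear_approach} --- and the only things to watch are that \( \mathcal{O}_{\tvector{x}^*} \) is open (so \( L \) may be slightly enlarged while remaining inside it) and that \( \tvector{u}_k \) is meaningful only when \( \tvector{x}_k \ne \tvector{x}^* \), which holds for all large \( k \).
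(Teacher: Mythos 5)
Your proof is correct. The ``if'' direction coincides with the paper's (closedness of \(V\), or equivalently compactness of \(V\) intersected with the unit sphere, traps the cluster points). For the ``only if'' direction you argue directly where the paper argues by contrapositive: you take the cluster set \(L\), note it is compact and contained in the open set \(\mathcal{O}_{\tvector{x}^*}\) (open because it is the complement of the zero set of the polynomial \(f_{n_{\mathrm{min}}}\)), use the positive distance between \(L\) and \(\mathbb{R}^n \setminus \mathcal{O}_{\tvector{x}^*}\) to build \(V\) as a closed \(r\)-neighborhood of \(L\), and then show the tail enters \(V\) by extracting, from any putative exceptional subsequence, a cluster point at distance at least \(r\) from \(L\) --- a contradiction. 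The paper instead supposes no admissible \(V\) exists, deduces that a subsequence of the normalized directions must approach the compact set \(K = \{\|\tvector{u}\| = 1\} \setminus \mathcal{O}_{\tvector{x}^*}\), and produces from nearest points in \(K\) a cluster point outside \(\mathcal{O}_{\tvector{x}^*}\). The two arguments use the same ingredients (compactness of the sphere, openness of \(\mathcal{O}_{\tvector{x}^*}\), positive distance between disjoint compact and closed sets), but your direct version yields an explicit witness \(V\) and spells out the compactness step that the paper compresses into its remark about a set of points \(\epsilon\)-distant from \(K\); you also correctly flag the degenerate case \(\mathcal{O}_{\tvector{x}^*} = \mathbb{R}^n\) and the need for \(\tvector{x}_k \ne \tvector{x}^*\), which the paper handles implicitly.
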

\begin{proof}
If the set \( V \) exists, then the set \( \{\, \tvector{u} \in V \,|\, \|\tvector{u}\| = 1\,\} \) is compact, so the ``if'' direction holds.

If no such set \( V \) exists, then some subsequence \( \{ \tvector{u}_i \}_{i=1}^{\infty} = \left\{ \frac{\tvector{x}_{k_i} - \tvector{x}^*}{\|\tvector{x}_{k_i} - \tvector{x}^*\|} \right\}_{i=1}^{\infty} \) must approach the compact set \( K = \{\, \tvector{u} \in \mathbb{R}^n \,|\, \|\tvector{u}\| = 1\,\} \setminus \mathcal{O}_{\tvector{x}^*} \).
That~is \( \operatorname{d}(\tvector{u}_i, K) \to 0 \).
This may be shown by contradiction; if \( \operatorname{d}(\tvector{u}_i, K) \not\to 0 \), a set of points \( \epsilon \)-distant from \( K \) would fulfill the requirements of \( V \), which would contradict the the assumption that no such set exists.
Define a sequence of closest points in the complement of \( \mathcal{O}_{\tvector{x}^*} \) by selecting, for~each \( i \), a point
\[ \tvector{k}_i \in \{\, \tvector{k} \in K \,|\, \operatorname{d}(\tvector{u}_i, \tvector{k}_i) = \operatorname{d}(\tvector{u}_i, K)\,\} \]
The sequence \( \{ \tvector{k}_i \}_{i=1}^{\infty} \) admits a cluster point, and \( \operatorname{d}(\tvector{u}_i, \tvector{k}_i) \to 0 \), so the sequence \( \{ \tvector{u}_i \}_{i=1}^{\infty} \) also admits a cluster point \( \tvector{u}^* \in K \). By definition of \( K \), \( \tvector{u}^* \notin \mathcal{O}_{\tvector{x}^*} \), so the ``only if'' direction holds.
\end{proof}

\subsection{New convergence theorems}	\label{sec:convergence_result}
In the previous sections, we have established that a generalized {\L}ojasiewicz gradient inequality holds on cones and established conditions under which sequences are funneled into those cones.
We now combine this with \Cref{thm:loj_converge_modified,thm:loj_linear_modified} to establish conditions under which sequences converge to essential singularities of bounded multivariate rational functions.

We say that a sequence \( \{ \tvector{x}_k \}_{k=1}^{\infty} \subset \mathbb{R}^n \) satisfies Assumption~\eqref{eqn:assumption_linearapproach} if
\begin{itemize}
	\item the set \( \mathcal{O}_{\tvector{x}^*} \) in \cref{def:safe_directions} contains every cluster point of the sequence
	\begin{equation}	\label{eqn:assumption_linearapproach}
		\left\{ \frac{\tvector{x}_k - \tvector{x}^*}{\|\tvector{x}_k - \tvector{x}^*\|} \right\}_{k \in \mathbb{N}} \,.	\tag{A4}
	\end{equation}
\end{itemize}

\begin{theorem}	\label{thm:loj_converge_cones}
Let \( f \) be a bounded multivariate rational function on \(\mathbb{R}^n\), and let \( \{ \tvector{x}_k \}_{k=1}^{\infty} \subset \mathbb{R}^n \) be a sequence of vectors with a cluster point \( \tvector{x}^* \). If Assumptions~\eqref{eqn:loj_assumption_1}, \eqref{eqn:loj_assumption_2}, and \eqref{eqn:assumption_linearapproach} hold on the tail of the sequence, then \( \tvector{x}^* \) is the limit of \( \{ \tvector{x}_k \}_{k=1}^{\infty} \).
\end{theorem}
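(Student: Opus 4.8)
The plan is to glue together the results already established. \Cref{thm:loj_converge_modified} reduces the conclusion to verifying the weak inequality \eqref{eqn:loj_on_sequence} at the points of the sequence; \cref{thm:Lojasiewicz_on_sequence} produces exactly that inequality once the directions of approach are confined to a closed subset of \(\mathcal{O}_{\tvector{x}^*}\); and \cref{thm:loj_assumption_4_forms} translates Assumption~\eqref{eqn:assumption_linearapproach} into precisely that confinement. The only genuine bookkeeping is to dispose of the case in which \(\tvector{x}^*\) already lies in \(\operatorname{domain}(f)\), to extract monotonicity of \(\{f(\tvector{x}_k)\}\) from \eqref{eqn:loj_assumption_1}, and to reconcile the ``for all sufficiently large \(k\)'' conclusion of \cref{thm:Lojasiewicz_on_sequence} with the ``for all \(k\)'' hypothesis of \cref{thm:loj_converge_modified} by passing to a tail.

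First I would record two preliminary facts. Each \(\tvector{x}_k\) must lie in the open set \(\operatorname{domain}(f)\) for the quantities \(f(\tvector{x}_k)\) and \(\nabla f(\tvector{x}_k)\) in \eqref{eqn:loj_assumption_1} to make sense, so I take \(U_0 := \operatorname{domain}(f)\) as the ambient open set for \cref{thm:loj_converge_modified}. Also, \eqref{eqn:loj_assumption_1} gives \(f(\tvector{x}_k) - f(\tvector{x}_{k+1}) \ge \sigma \|\nabla f(\tvector{x}_k)\|\,\|\tvector{x}_{k+1} - \tvector{x}_k\| \ge 0\) for all large \(k\), so \(\{f(\tvector{x}_k)\}\) is eventually non-increasing, hence monotonic; since \(f\) is bounded, \(L := \lim_{k \to \infty} f(\tvector{x}_k)\) exists and is finite.

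If \(\tvector{x}^* \in U_0\), then \(f\) is real-analytic on a neighborhood of \(\tvector{x}^*\); passing to a subsequence \(\tvector{x}_{k_i} \to \tvector{x}^*\) and using continuity gives \(L = \lim_i f(\tvector{x}_{k_i}) = f(\tvector{x}^*)\), so \eqref{eqn:loj_on_sequence} is nothing but \cref{thm:loj_ineq} restricted to the sequence, and \cref{thm:loj_converge_modified} yields the claim. So assume \(\tvector{x}^* \notin U_0\); then \(\tvector{x}_k \ne \tvector{x}^*\) for every \(k\), and the normalized differences in \eqref{eqn:assumption_linearapproach} are defined. By \cref{thm:loj_assumption_4_forms}, \eqref{eqn:assumption_linearapproach} furnishes a closed set \(V \subset \mathcal{O}_{\tvector{x}^*}\) with \((\tvector{x}_k - \tvector{x}^*)/\|\tvector{x}_k - \tvector{x}^*\| \in V\) for all large \(k\), which is condition \eqref{eqn:linear_approach}. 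Together with the monotonicity of \(\{f(\tvector{x}_k)\}\), this verifies the hypotheses of \cref{thm:Lojasiewicz_on_sequence}, which then supplies an open neighborhood \(U\) of \(\tvector{x}^*\) and constants \(\theta \in (0, 1/2]\), \(c \in \mathbb{R}\) such that, for all sufficiently large \(k\), \(\tvector{x}_k \in U\) implies \(|f(\tvector{x}_k) - L|^{1-\theta} \le c\|\nabla f(\tvector{x}_k)\|\).

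Finally I would pass to a tail. Choose \(N\) large enough that \eqref{eqn:loj_assumption_1}, \eqref{eqn:loj_assumption_2}, and the implication just obtained all hold for indices \(\ge N\), and apply \cref{thm:loj_converge_modified} to \(\{\tvector{x}_{N+k}\}_{k=1}^{\infty} \subset U_0\): this sequence still has \(\tvector{x}^*\) as a cluster point, still satisfies \eqref{eqn:loj_assumption_1}--\eqref{eqn:loj_assumption_2}, has the same limit \(L\) of \(f\)-values, and now satisfies \eqref{eqn:loj_on_sequence} with the neighborhood \(U\) for \emph{every} one of its indices. Hence \(\tvector{x}^* = \lim_k \tvector{x}_{N+k} = \lim_k \tvector{x}_k\). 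All the substance lies in the earlier sections; the one point requiring care is this tail reindexing, making the two notions of ``eventually'' in \cref{thm:Lojasiewicz_on_sequence} and \cref{thm:loj_converge_modified} coincide --- so beyond that the proof is short.
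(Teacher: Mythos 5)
Your proposal is correct and follows essentially the same route as the paper: use \eqref{eqn:loj_assumption_1} for monotonicity of \(\{f(\tvector{x}_k)\}\), convert \eqref{eqn:assumption_linearapproach} via \cref{thm:loj_assumption_4_forms} into the hypothesis \eqref{eqn:linear_approach} of \cref{thm:Lojasiewicz_on_sequence}, and feed the resulting inequality \eqref{eqn:loj_on_sequence} into \cref{thm:loj_converge_modified}. The extra bookkeeping you supply (the case \(\tvector{x}^* \in \operatorname{domain}(f)\), which the paper absorbs into \cref{thm:Lojasiewicz_on_sequence}, and the tail reindexing) only makes explicit what the paper's three-line proof leaves implicit.
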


\begin{proof}
Assumption~\eqref{eqn:loj_assumption_1} guarantees that the sequence \( \{ f(\tvector{x}_k) \}_{k \in \mathbb{N}} \) is decreasing.
Assumption~\eqref{eqn:assumption_linearapproach} and \Cref{thm:loj_assumption_4_forms} then fulfill the conditions of \Cref{thm:Lojasiewicz_on_sequence}.
This fulfills condition \eqref{eqn:loj_on_sequence} of \Cref{thm:loj_converge_modified}.

The conditions of \Cref{thm:loj_converge_modified} are thus satisfied, so the conclusions of \Cref{thm:loj_converge_cones} hold.
\end{proof}

The proof of \Cref{thm:loj_converge_cones} merely uses \Cref{thm:Lojasiewicz_on_sequence} to show that the conditions of \Cref{thm:loj_converge_modified} hold, so the same argument provides the following specialization of \Cref{thm:loj_linear_modified}.
\begin{theorem}	\label{thm:loj_linear_cones}
Under the conditions of \Cref{thm:loj_converge_cones}, if Assumption~\eqref{eqn:loj_assumption_3} holds, then \( \nabla f(\tvector{x}_k) \to 0 \) and the convergence rate may be estimated as
\begin{equation*}
	\|\tvector{x}^* - \tvector{x}_k\|	=
		\left\{\begin{matrix}
			O(q^k)	&	\quad \text{ if }	&	\theta = \frac12	&	\text{ (for some } 0 < q < 1 \text{),}\\
			O(k^{\frac{-\theta}{1 - 2\theta}})	&	\quad \text{ if }	&	0 < \theta < \frac12	&
		\end{matrix}\right.
\end{equation*}
where \( \theta \) is such that \eqref{eqn:loj_on_sequence} holds.
\end{theorem}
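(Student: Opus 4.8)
The plan is to observe that Theorem~\ref{thm:loj_linear_cones} stands in exactly the same relation to Theorem~\ref{thm:loj_converge_cones} as Theorem~\ref{thm:loj_linear_modified} does to Theorem~\ref{thm:loj_converge_modified}, so the argument should be a near-immediate corollary once the hypotheses are lined up. First I would recall that the proof of Theorem~\ref{thm:loj_converge_cones} never uses its hypotheses to reach a convergence conclusion directly; it invokes Theorem~\ref{thm:Lojasiewicz_on_sequence}—whose hypotheses are supplied by \eqref{eqn:assumption_linearapproach} together with Proposition~\ref{thm:loj_assumption_4_forms}, and by the monotonicity of \(\{f(\tvector{x}_k)\}_{k=1}^{\infty}\) guaranteed by \eqref{eqn:loj_assumption_1}—purely to certify that \eqref{eqn:loj_on_sequence} holds on some open neighbourhood of \(\tvector{x}^*\) with constants \(\theta \in (0,1/2]\) and \(c \in \mathbb{R}\). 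Combined with \eqref{eqn:loj_assumption_1} and \eqref{eqn:loj_assumption_2}, which are among the standing hypotheses, this places the sequence squarely inside the hypotheses of Theorem~\ref{thm:loj_converge_modified}.

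Next I would simply add the new hypothesis \eqref{eqn:loj_assumption_3}. The hypotheses of Theorem~\ref{thm:loj_linear_modified} are precisely ``the conditions of Theorem~\ref{thm:loj_converge_modified}'' together with \eqref{eqn:loj_assumption_3}; since the former have just been verified and the latter is now assumed, Theorem~\ref{thm:loj_linear_modified} applies verbatim. Its conclusion—that \(\nabla f(\tvector{x}_k) \to \tvector{0}\) and that \(\|\tvector{x}^* - \tvector{x}_k\|\) is \(O(q^k)\) for some \(0<q<1\) when \(\theta = \tfrac12\), and \(O(k^{-\theta/(1-2\theta)})\) when \(0 < \theta < \tfrac12\), with \(\theta\) the exponent for which \eqref{eqn:loj_on_sequence} holds—is exactly the conclusion asserted in Theorem~\ref{thm:loj_linear_cones}.

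I do not anticipate a genuine obstacle: this theorem is a specialization, not a new argument. The only point requiring a moment's care is the bookkeeping of the exponent \(\theta\)—the \(\theta\) delivered by Theorem~\ref{thm:Lojasiewicz_on_sequence} is the very one that makes \eqref{eqn:loj_on_sequence} hold, and Theorem~\ref{thm:loj_linear_modified} states its rate in terms of exactly that \(\theta\), so the two match with nothing further to check. One could additionally remark that replacing \(\theta\) by any smaller value in \((0,\tfrac12]\) only weakens \eqref{eqn:loj_on_sequence}, so the stated rate is the best afforded by the constants actually produced in the proof of Theorem~\ref{thm:loj_converge_cones}.
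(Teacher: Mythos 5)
Your proposal is correct and matches the paper's proof, which likewise observes that the argument for \Cref{thm:loj_converge_cones} already certifies the hypotheses of \Cref{thm:loj_converge_modified} via \Cref{thm:Lojasiewicz_on_sequence}, so that adding Assumption~\eqref{eqn:loj_assumption_3} lets \Cref{thm:loj_linear_modified} apply directly. Your extra remark about the bookkeeping of \(\theta\) is accurate but not needed beyond what the paper states.
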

\begin{proof}
Follows immediately from the proof of \Cref{thm:loj_converge_cones} and from the conditions of \Cref{thm:loj_linear_modified}.
\end{proof}

These theorems can be used to show convergence of sequences to cluster points not in a function's domain, but both theorems rely on assumption~\eqref{eqn:assumption_linearapproach}, which may be difficult to verify a priori.

\section{Algorithms, examples, and implications}	\label{sec:application}
Uchmajew et~al.\ have shown that \eqref{eqn:loj_assumption_1}, \eqref{eqn:loj_assumption_2}, and \eqref{eqn:loj_assumption_3} hold for the sequences produced by various optimization algorithms \cite{USCHMA:2015,SCH-USC:2015,LI-US-ZH:2015}.
The reader should note, however, that these results may rely~on additional restrictions on the sequence or on the objective function, which may preclude application to sequences approaching singularities of an objective function.

Notably, in the analysis of the Gradient-Related Projection Method with Line-Search (GRPMLS) from \cite{SCH-USC:2015}, the assumption (A0) required by Corollary~2.9 in \cite{SCH-USC:2015} need only hold on the sequence itself, and thus \eqref{eqn:loj_assumption_1}~and~\eqref{eqn:loj_assumption_2} hold even if the sequence diverges or has a cluster point not in the domain of the objective function.
Unfortunately, the additional conditions that Uschmajew used to prove that \eqref{eqn:loj_assumption_3} holds, in particular that \( \nabla f \) must be Lipschitz continuous on a neighborhood of \( \tvector{x}^* \), do not typically hold if \( \tvector{x}^* \) is an essential singularity.

If the gradient-related projection method with line-search is applied to a bounded multivariate rational function then the sequence produced satisfies \eqref{eqn:loj_assumption_1}~and~\eqref{eqn:loj_assumption_2}.
One must still guarantee the existence of cluster points, and show that \eqref{eqn:assumption_linearapproach} holds.

\subsection{Example: \Cref{fig:rational_gradient_descent}}
\Cref{fig:rational_gradient_descent} illustrates a sequence that maximizes the rational function defined by \((x,y) \mapsto \frac{-xy}{(x^2 + y^2)(1 + x^2 + y^2)}\).
We will employ the results of this paper to establish its convergence.
More precisely, we will examine the equivalent problem of minimizing \(f(x,y) = \frac{xy}{(x^2 + y^2)(1 + x^2 + y^2)}\).

The sequence is produced by the method of \cite{SCH-USC:2015}, and thus satisfies \eqref{eqn:loj_assumption_1}~and~\eqref{eqn:loj_assumption_2}.

Any cluster point of the sequence will occur either in the domain of \(f\) or at its singularity --- that is, at \(\tvector{0}\).
To guarantee convergence in the latter case, we establish that \(\mathcal{O}_{\tvector{x}^*} = \mathbb{R}^n \setminus \{\tvector{0}\}\).
As established in \cref{def:safe_directions}, we may examine the Maclaurin series coefficients of the denominator of \((x,y,t) \mapsto f(tx,ty)\).
The zeroth and first Maclaurin series coefficients are identically \(\tvector{0}\).
The second Maclaurin series coefficient is \(f_2(x,y) = \frac{\mathrm{d}^2}{\mathrm{d}t^2}t^2(x^2 + y^2)(1 + t^2(x^2 + y^2))|_{t=0} = 2 (x^2 + y^2)\), which is nonzero on~\(\mathbb{R}^n \setminus \{\tvector{0}\}\).
Thus \(\mathcal{O}_{\tvector{x}^*} = \mathbb{R}^n \setminus \{\tvector{0}\}\), and \eqref{eqn:assumption_linearapproach} holds regardless of the direction from which the sequence approaches.

Noting that the degree of the denominator \((x^2 + y^2)(1 + x^2 + y^2)\) exceeds that of the numerator \(xy\) and that the sequence of images \(f(\tvector{x}_k) = f(x_k, y_k)\) is negative and decreasing ensures that \(\|\tvector{x}_k\|\) remains bounded.
Thus a cluster point exists.
This cluster point must be the limit of the sequence.
If it is in the domain of \(f\), convergence would be guaranteed by the results of \cite{SCH-USC:2015}.
Otherwise, convergence would be guaranteed by \cref{thm:loj_converge_cones}.

In this example, it is also trivial to establish that the sequence converges to \(\tvector{0}\).
If the sequence were to converge to some \(\tvector{x}^*\) in the domain of \(f\), then \cite[Corrollary~2.11]{SCH-USC:2015} and continuity would provide \(\nabla f(\tvector{x}^*) = \tvector{0}\).
Calculation gives \(\nabla f(x,y) \ne \tvector{0}\), so this case does not occur.

Though this trivial example could be solved using alternate methods, the results of this paper provide more interesting results when a function admits multiple singularities.
As an example of this, the function \((x,y) \mapsto \sum_{i=1}^j w_i f(x - u_i, y - v_i)\), for constants \(w_i, u_i, v_i \in \mathbb{R}\) and \(i=1, \dots, j\), admits multiple singularities \(\tvector{x}^*_i = (u_i, v_i)\), each of which has  \(\mathcal{O}_{\tvector{x}^*_i} = \mathbb{R}^n \setminus \{\tvector{0}\}\).
Any sequence produced by GRPMLS would be precluded from cycling among the singularities by the results of this paper.

\subsection{Convergence in direction}	\label{sec:converge_direction}
If a sequence admits a cluster point, one may ask whether this cluster point is the limit of the sequence.
If a sequence does not admit a cluster point, one may instead ask whether there exists a related convergent sequence that approximates a solution of a corresponding problem.
In this \namecref{sec:converge_direction} we examine a class of problems for which such related convergent sequences may be produced.

If a sequence fails to converge because it is unbounded, one may optimize instead a homogeneous function of degree 0, such as that defined by
\begin{equation}	\label{eqn:homogenized_objective}
	\bar{f}(\tvector{x})	\quad=\quad	\min_{\alpha \in \mathbb{R}} f(\alpha \tvector{x})	\,.
\end{equation}
A sequence that optimizes \(\bar{f}\) may be paired with a sequence of scalar constants to describe a sequence that optimizes the original objective function \(f\).
In many cases, this is significantly more difficult to analyze than the original objective function \(f\), in part because \(\bar{f}\) may introduce singularities.
For some notable problems, such as low-rank approximation of tensors, \(\bar{f}\) is a bounded multivariate rational function, and is thus within the scope of this paper.

Optimizing \(\bar{f}\) may still produce a divergent sequence \( \{ \tvector{x}_i \}_{i \in \mathbb{N}} \).
To address~this, consider instead the normalized sequence \( \{ \tvector{x}_i / \|\tvector{x}_i\| \}_{i \in \mathbb{N}} \), which also optimizes \(\bar{f}\).
This normalized sequence must admit a cluster point, by compactness of the unit ball in a finite-dimensional Banach space.
We will show that if Assumption~\eqref{eqn:loj_assumption_1} holds on \( \{ \tvector{x}_i \}_{i \in \mathbb{N}} \), it holds also on the normalized sequence \( \{ \tvector{x}_i / \|\tvector{x}_i\| \}_{i \in \mathbb{N}} \).

\begin{lemma}	\label{thm:normalization_bound}
Let \(V\) be a Hilbert space, and let \( \tvector{u}, \tvector{v} \in V \) be vectors such that \( \|\tvector{u}\| = 1\).
Then
\begin{equation}	\label{eqn:normalization_bound}
	\left\|\tvector{u} - \frac{\tvector{v}}{\|\tvector{v}\|}\right\|	\quad\le\quad	2 \|\tvector{u} - \tvector{v}\|	\,.
\end{equation}
\end{lemma}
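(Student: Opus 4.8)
The plan is a one-step triangle-inequality argument. First I would note that the statement presupposes $\tvector{v} \neq \tvector{0}$ (otherwise $\tvector{v}/\|\tvector{v}\|$ is undefined), so we may assume $\tvector{v} \neq \tvector{0}$. Then, inserting $\tvector{v}$ as an intermediate point and applying the triangle inequality,
\[
	\left\|\tvector{u} - \frac{\tvector{v}}{\|\tvector{v}\|}\right\|
	\;\le\;
	\|\tvector{u} - \tvector{v}\| + \left\|\tvector{v} - \frac{\tvector{v}}{\|\tvector{v}\|}\right\| \,.
\]

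The second term on the right is evaluated exactly rather than estimated: since $\tvector{v} - \tvector{v}/\|\tvector{v}\|$ is the scalar multiple $\bigl(1 - 1/\|\tvector{v}\|\bigr)\tvector{v}$ of $\tvector{v}$, we get $\bigl\|\tvector{v} - \tvector{v}/\|\tvector{v}\|\bigr\| = \bigl|\,\|\tvector{v}\| - 1\,\bigr|$. Using the hypothesis $\|\tvector{u}\| = 1$ to rewrite $1$ as $\|\tvector{u}\|$, and then the reverse triangle inequality, $\bigl|\,\|\tvector{v}\| - \|\tvector{u}\|\,\bigr| \le \|\tvector{v} - \tvector{u}\| = \|\tvector{u} - \tvector{v}\|$. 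Substituting this back into the displayed inequality yields $\bigl\|\tvector{u} - \tvector{v}/\|\tvector{v}\|\bigr\| \le 2\|\tvector{u} - \tvector{v}\|$, which is exactly \eqref{eqn:normalization_bound}.

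I expect no genuine obstacle here; the argument uses only the norm axioms together with the ordinary and reverse triangle inequalities, so it in fact holds verbatim in any normed vector space, and the Hilbert-space hypothesis is not really needed. The only points that call for a little care are that the middle quantity is computed by an \emph{equality} (it is a scalar multiple of $\tvector{v}$), and that the reverse triangle inequality is applied so as to produce $\|\tvector{u} - \tvector{v}\|$ and not a weaker bound such as $\|\tvector{u}\| + \|\tvector{v}\|$. One may also remark that the constant $2$ cannot be lowered: taking $\tvector{v} = -\varepsilon\,\tvector{u}$ and letting $\varepsilon \to 0^+$ keeps the left side equal to $2$ while $2\|\tvector{u} - \tvector{v}\| = 2(1+\varepsilon) \to 2$, so the inequality is asymptotically sharp.
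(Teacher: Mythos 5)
Your proof is correct, and it takes a genuinely different route from the paper's. The paper works inside the Hilbert space structure: setting \(k = \|\tvector{v}\|\), \(\tvector{w} = \tvector{v}/k\), and \(x = \operatorname{Re}\langle \tvector{u}, \tvector{w}\rangle\), it expands \(4\|\tvector{u} - k\tvector{w}\|^2 - \|\tvector{u} - \tvector{w}\|^2\) into the quadratic expression \(2(k-x)^2 + 1 + x - 2x^2\) and shows this is nonnegative by a short case analysis on \(x\), invoking Cauchy--Schwarz to restrict \(x \in [-1,1]\); in other words, it proves the squared inequality \(\|\tvector{u} - \tvector{v}/\|\tvector{v}\|\|^2 \le 4\|\tvector{u} - \tvector{v}\|^2\) by direct algebra with inner products. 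Your argument instead splits off the radial displacement: the triangle inequality through the intermediate point \(\tvector{v}\), together with the exact identity \(\|\tvector{v} - \tvector{v}/\|\tvector{v}\|\| = |\,\|\tvector{v}\| - 1\,| = |\,\|\tvector{v}\| - \|\tvector{u}\|\,|\) and the reverse triangle inequality. This is shorter, avoids the inner product entirely, and (as you note) establishes the lemma in any normed space, showing the Hilbert-space hypothesis in the statement is superfluous; the paper's computation buys nothing extra here, since both routes produce the same constant \(2\), which your limiting example \(\tvector{v} = -\varepsilon\tvector{u}\) correctly shows is sharp. Your observation that \(\tvector{v} \neq \tvector{0}\) is implicitly required is also accurate (the paper's proof likewise uses \(k = \|\tvector{v}\| > 0\) tacitly, in the step \((k-x)^2 \ge x^2\) for \(x \le 0\)).
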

\begin{proof}
Let \(k = \|\tvector{v}\|\), let \( \tvector{w} = \tvector{v} / k \), and let \(x = \operatorname{Re}(\langle \tvector{u}, \tvector{w}\rangle)\).
\begin{align}
		\eqref{eqn:normalization_bound}	& &	\iff	\notag
\\
	\textstyle\frac{1}{2} ( 4 \left\|\tvector{u} - k \tvector{w}\right\|^2 - \left\|\tvector{u} - \tvector{w}\right\|^2 ) &	\ge 0	&	\iff	\notag
\\
	2 k^2 - 4 k x + 1 + x &	\ge 0	&	\iff	\notag
\\
	2 (k - x)^2 + 1 + x - 2x^2 &	\ge 0	&	\label{eqn:diff_of_norms}
\end{align}
If \( \frac{-1}{2} \le x \le 1 \), then \( 1 + x - 2 x^2 \ge 0 \), and \eqref{eqn:diff_of_norms} holds.
\\
If \( x \le 0 \), then \( (k - x)^2 \ge x^2 \) by the choice of~\(k\), and \eqref{eqn:diff_of_norms}~holds if~\( 1 + x \ge 0 \).
\\
By the Cauchy-Schwarz inequality, \(x \in [-1,1]\), so \eqref{eqn:diff_of_norms} holds.
\end{proof}

\begin{proposition}
Suppose that \( f : \mathbb{R}^n \to \mathbb{R} \) has the property that \( f(\tvector{x}) = f(c\tvector{x}) \) for~all \( c \in \mathbb{R} \). Suppose also that \( \{ \tvector{x}_i \}_{i \in \mathbb{N}} \) is a sequence such that Assumption~\eqref{eqn:loj_assumption_1} holds. Then Assumption~\eqref{eqn:loj_assumption_1} holds also for the sequence \( \{ \tvector{x}_i / \| \tvector{x}_i \| \}_{i \in \mathbb{N}} \).
\end{proposition}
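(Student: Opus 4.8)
The plan is to transport the inequality \eqref{eqn:loj_assumption_1} across the normalization map one factor at a time. Write $\tvector{y}_k = \tvector{x}_k / \|\tvector{x}_k\|$; note that for $\nabla f(\tvector{x}_k)$ and the difference $\tvector{x}_{k+1} - \tvector{x}_k$ to appear meaningfully in \eqref{eqn:loj_assumption_1} we must have $\tvector{x}_k \ne \tvector{0}$, so each $\tvector{y}_k$ is well-defined. Since $f$ is homogeneous of degree $0$, the left-hand side of \eqref{eqn:loj_assumption_1} is invariant under normalization: $f(\tvector{y}_k) - f(\tvector{y}_{k+1}) = f(\tvector{x}_k) - f(\tvector{x}_{k+1})$. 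It then remains to bound $\|\nabla f(\tvector{y}_k)\|\,\|\tvector{y}_{k+1} - \tvector{y}_k\|$ above by a fixed multiple of $\|\nabla f(\tvector{x}_k)\|\,\|\tvector{x}_{k+1} - \tvector{x}_k\|$.

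First I would record how $\nabla f$ transforms under scaling. Differentiating the identity $f(c\tvector{x}) = f(\tvector{x})$ with respect to $\tvector{x}$ gives $c\,\nabla f(c\tvector{x}) = \nabla f(\tvector{x})$, so $\nabla f$ is homogeneous of degree $-1$; taking $c = 1/\|\tvector{x}_k\|$ yields $\|\nabla f(\tvector{y}_k)\| = \|\tvector{x}_k\|\,\|\nabla f(\tvector{x}_k)\|$. Second, I would control $\|\tvector{y}_{k+1} - \tvector{y}_k\|$ via \cref{thm:normalization_bound}: applying that lemma with $\tvector{u} = \tvector{x}_k/\|\tvector{x}_k\|$ (a unit vector) and $\tvector{v} = \tvector{x}_{k+1}/\|\tvector{x}_k\|$, whose normalization $\tvector{v}/\|\tvector{v}\|$ is exactly $\tvector{y}_{k+1}$, gives
\[
	\|\tvector{y}_{k+1} - \tvector{y}_k\| \;\le\; 2\left\| \frac{\tvector{x}_k}{\|\tvector{x}_k\|} - \frac{\tvector{x}_{k+1}}{\|\tvector{x}_k\|} \right\| \;=\; \frac{2}{\|\tvector{x}_k\|}\,\|\tvector{x}_{k+1} - \tvector{x}_k\| \,.
\]
Multiplying the two displayed bounds cancels the factor $\|\tvector{x}_k\|$ and produces $\|\nabla f(\tvector{y}_k)\|\,\|\tvector{y}_{k+1} - \tvector{y}_k\| \le 2\,\|\nabla f(\tvector{x}_k)\|\,\|\tvector{x}_{k+1} - \tvector{x}_k\|$. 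Combining this with the invariance of the left-hand side and the hypothesis that \eqref{eqn:loj_assumption_1} holds for $\{\tvector{x}_i\}$ with some constant $\sigma > 0$, we conclude $f(\tvector{y}_k) - f(\tvector{y}_{k+1}) \ge \tfrac{\sigma}{2}\,\|\nabla f(\tvector{y}_k)\|\,\|\tvector{y}_{k+1} - \tvector{y}_k\|$, so \eqref{eqn:loj_assumption_1} holds for $\{\tvector{y}_i\}$ with constant $\sigma/2$, over the same range of indices.

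There is no serious obstacle here; the argument is a short chain of elementary estimates. The points that warrant care are that each $\tvector{x}_k \ne \tvector{0}$ (implicit in the hypothesis, as noted), that differentiating the homogeneity relation is legitimate (valid since $f$ is assumed differentiable), and that \cref{thm:normalization_bound} is stated for general Hilbert spaces so it certainly applies to $\mathbb{R}^n$. The one place where a sign or exponent slip could enter is the scaling law for the gradient, so I would verify $\nabla f(c\tvector{x}) = c^{-1}\nabla f(\tvector{x})$ against Euler's identity before assembling the final bound.
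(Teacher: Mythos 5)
Your argument is correct and is essentially the paper's own proof: both exploit degree-$0$ homogeneity for the function values, the induced degree-$(-1)$ scaling $\nabla f(\tvector{x}/\|\tvector{x}\|) = \|\tvector{x}\|\nabla f(\tvector{x})$, and \cref{thm:normalization_bound} applied to $\tvector{u} = \tvector{x}_k/\|\tvector{x}_k\|$ and $\tvector{v} = \tvector{x}_{k+1}/\|\tvector{x}_k\|$, arriving at \eqref{eqn:loj_assumption_1} for the normalized sequence with the same constant $\sigma/2$. No changes needed.
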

\begin{proof}
For clarity, let \( \tvector{u}_i = \tvector{x}_i / \| \tvector{x}_i \| \). By Assumption~\eqref{eqn:loj_assumption_1}, there is some \( \sigma > 0 \) such that
	\[ f(\tvector{x}_i) - f(\tvector{x}_{i+1}) \ge \sigma \| \nabla f(\tvector{x}_{i})\| \, \|\tvector{x}_{i} - \tvector{x}_{i+1} \| \,. \]
It is easily verified, by definitions of the derivative and \(f\), that
	\[ \nabla f(\tvector{u}_i) = \| \tvector{x}_i \| \nabla f(\tvector{x}_i) \, . \]
Further, by \cref{thm:normalization_bound},
	\[ \| \tvector{u}_{i} - \tvector{u}_{i + 1} \| \quad\le\quad 2 \left\| \tvector{u}_{i} - \frac{\tvector{x}_{i+1}}{\|\tvector{x}_{i}\|} \right\| \quad=\quad 2 \frac{\| \tvector{x}_{i} - \tvector{x}_{i+1} \|}{\| \tvector{x}_i \|} \,.\]
Thus,
\[\begin{aligned}
	&	\frac{\sigma}{2} \| \nabla f(\tvector{u}_{i})\| \, \|\tvector{u}_{i} - \tvector{u}_{i+1} \|	\\
	=\quad&	\frac{\sigma}{2} \| \nabla f(\tvector{x}_{i})\| \, \|\tvector{x}_{i}\| \, \|\tvector{u}_{i} - \tvector{u}_{i+1} \|	\\
	\le\quad&	\sigma \| \nabla f(\tvector{x}_{i})\| \, \|\tvector{x}_{i} - \tvector{x}_{i+1} \|	\\
	\le\quad&	f(\tvector{x}_i) - f(\tvector{x}_{i+1})	\\
	=\quad&	f(\tvector{u}_i) - f(\tvector{u}_{i+1})	\,,
\end{aligned} \]
So Assumption~\eqref{eqn:loj_assumption_1} holds for the sequence \( \{ \tvector{u}_i \}_{i \in \mathbb{N}} \).
\end{proof}

\subsection{Implications for tensor approximation}
Creation of low-rank approximations of tensors, specifically when approximating a tensor using two or more separable tensors, is prone to produce divergent sequences of parameters.
We make no effort to establish convergence of such a sequence.
Instead, we consider the convergence of the sequence of normalized parameters, as in \cref{sec:converge_direction}.

Let \(\tau(\tvector{x})\) be a multilinear map from parameters \(\tvector{x} \in \mathbb{R}^n\) to tensors, and let \(\ttensor{T}\) be the tensor one wishes to approximate.
If one selects the usual objective function \(f(\tvector{x}) = \|\tau(\tvector{x}) - \ttensor{T}\|^2\), one obtains a polynomial function on \(\mathbb{R}^n\).
If instead one selects \(\hat{f}(\tvector{x}) = \left\|\frac{\langle\tau(\tvector{x}), \ttensor{T}\rangle}{\|\tau(\tvector{x})\|^2} \tau(\tvector{x}) - \ttensor{T}\right\|^2\), one obtains a bounded multivariate rational function with denominator \(\|\tau(\tvector{x})\|^4\).
Moreover, \(\hat{f}\) is a homogeneous function of degree 0, and \(\hat{f}(\tvector{x}) = \min_{\alpha \in \mathbb{R}} f(\alpha \tvector{x})\) wherever \(\hat{f}\) is defined.

A full analysis of the Maclaurin series coefficients \(\frac{\mathrm{d}^m}{\mathrm{d}t^m} \|\tau(\tvector{x + t \tvector{d}})\|^4\) would be beyond the scope of this paper.
We can, however, draw conclusions from established properties of the set \(\mathcal{O}_{\tvector{x}^*}\).
In particular, \(\mathbb{R}^n \setminus \mathcal{O}_{\tvector{x}^*}\) is of Lebesgue measure 0, so it covers almost every direction of approach.

If one uses the GRPMLS method from \cite{SCH-USC:2015} to optimize \(\hat{f}\), and normalizes the resulting parameters, then it is likely that the resulting sequence of parameters will converge.
That is, it is likely that the approximation's summands will form a stable configuration, changing little except in scale.

\section{Concluding remarks}
We have shown that a generalized {\L}ojasiewicz inequality holds on sets adjacent to essential singularities of bounded multivariate rational functions.
With this result, we have provided sufficient conditions under which a sequence will converge to a singularity of the objective function.
Finally, we have shown that these results may be employed in the study of unbounded sequences by converting to a projective space.

The results employed in this paper could potentially be extended to sequences for which Assumption~\eqref{eqn:assumption_linearapproach} does not hold.
In particular, we expect that if a sequence is well-described by some smooth curve, that is \({\tvector{x}_i = \gamma(1 / i)}\) for some smooth \(\gamma\) with \(\gamma(0) = \tvector{x}^*\), then a generalized {\L}ojasiewicz inequality should hold on that sequence.
This would, however, be of little practical utility due to the difficulty of showing that an algorithm produces sequences well-described by smooth curves.

\section*{Acknowledgement}
This paper was completed with the guidance of Martin J.\ Mohlenkamp.

\bibliographystyle{siamplain}
\raggedright
\bibliography{nrefs,sor,lojasiewicz}

\appendix
\section{Full proof of \cref{thm:loj_converge_modified,thm:loj_linear_modified}}	\label{apx:full_proof_lojasiewicz}
\begin{proof}
Assumption~\eqref{eqn:loj_on_sequence} implies that \( \lim_{k \to \infty} f(\tvector{x}_k) \) exists.
The gradient of \( f \) is finite where defined, so assumption \eqref{eqn:loj_on_sequence} requires that \( \displaystyle\lim_{k \to \infty} f(\tvector{x}_k) \) be finite.

A proof of the original theorems of Absil, Uschmajew, et~al.\ is provided in \cite[p.~644]{SCH-USC:2015}. 
Because only three changes must be made to the original proof, I quote Uschmajew's proof here as indented block quotes.\footnote{To simplify reading, I have changed Uschmajew's notation to match mine.}
Between block quotes, I supply the alterations which extend Uschmajew's proof to a proof of \Cref{thm:loj_converge_modified,thm:loj_linear_modified}.
Let \(f_k = f(\tvector{x}_k)\), and let \(g^-_k = \|\nabla f(\tvector{x}_k)\|\).
\begin{quotation}
We can assume that \(g^-_k > 0\) for~all \(k\) since otherwise the sequence becomes stationary and there is nothing to prove.
\end{quotation}
There will also be no loss of generality to assume that \eqref{eqn:loj_assumption_1} and \eqref{eqn:loj_assumption_2} hold for~all \(k\) and that \(\lim_{k \to \infty} f_k = 0\).
Then \(0 \le f_k\) for~all \(k\) and the {\L}ojasiewicz gradient inequality at \(\tvector{x}^*\) reads as
\begin{equation}	\label{eqn:Adot1}
	f_k^{1 - \theta}	\le	\Lambda g^-_k	\tag{A.1}
\end{equation}
whenever \(\tvector{x}_k \in U\). The set \(U\) contains an open ball \(B_{\delta}(\tvector{x}^*)\) for some \(\delta > 0\).
\begin{quotation}
Let \(\epsilon \in (0, \delta]\), and assume \(\|\tvector{x}_k - \tvector{x}^*\| < \delta\).
Then, by \eqref{eqn:Adot1} and \eqref{eqn:loj_assumption_1},
	\[	\|\tvector{x}_k - \tvector{x}_{k+1}\|	\le	\frac{\Lambda}{\sigma} f_k^{\theta - 1}(f_k - f_{k+1})	\,.	\]
Using the fact that for \({\phi \in [f_{k+1},f_k]}\) there holds \({f_k^{\theta-1} \le \phi^{\theta - 1} \le f_{k+1}^{\theta-1}}\), we can estimate
	\[	f_k^{\theta - 1}(f_k - f_{k+1})	\le	\int_{f_{k+1}}^{f_k} \phi^{\theta-1} \operatorname{d}\phi	=	\frac{1}{\theta}(f_k^{\theta} - f_{k+1}^{\theta})	\]
and thus obtain
	\[	\|\tvector{x}_k - \tvector{x}_{k+1}\|	\le	\frac{\Lambda}{\sigma \theta} (f_k^{\theta} - f_{k+1}^{\theta})	\,.	\]
More generally, let \(\|\tvector{x}_j - \tvector{x}^*\| < \epsilon \le \delta\) for \(k \le j < m\); we get by this argument that
\begin{equation}	\label{eqn:Adot2}
	\|\tvector{x}_m - \tvector{x}_k\|
		\le	\sum_{j = k}^{m} \|\tvector{x}_{j+1} - \tvector{x}_{j}\|
		\le	\sum_{j = k}^{m} \frac{\Lambda}{\sigma \theta} (f_j^{\theta} - f_{j+1}^{\theta})
		\le	\frac{\Lambda}{\sigma \theta} f_k^{\theta}
	\,.	\tag{A.2}
\end{equation}
\end{quotation}
Since \(\tvector{x}^*\) is an accumulation point, and because \(f_k \to 0\), we can pick \(n\) so large that
\begin{quotation}
	\[	\|\tvector{x}_k - \tvector{x}^*\| < \frac{\epsilon}{2}	\quad\text{and}\quad	\frac{\Lambda}{\sigma \theta} f_k^{\theta} < \frac{\epsilon}{2}	\,.		\]
Then \eqref{eqn:Adot2} inductively implies \(\|\tvector{x}_m - \tvector{x}^*\| < \epsilon\) for all \(m > k\).
This proves that \(\tvector{x}^*\) is the limit point of the sequence, and, by \eqref{eqn:loj_assumption_3}, \(g^-_k \to 0\).

To estimate the convergence rate, let \(r_k = \sum_{j=k}^{\infty} \|\tvector{x}_{j+1} - \tvector{x}_j\|\).
Then \(\|\tvector{x}_{k} - \tvector{x}^*\| \le r_k\), so it suffices to estimate the latter. By \eqref{eqn:Adot2}, \eqref{eqn:Adot1}, and \eqref{eqn:loj_assumption_3}, there exists \(k_0 \ge 1\) such~that for \(k \ge k_0\) it holds that
	\[	r_n^{\frac{1-\theta}{\theta}}	\le	\left(\frac{\Lambda}{\sigma \theta}\right)^{\frac{1-\theta}{\theta}} \frac{\Lambda}{\kappa} \|\tvector{x}_{k+1} - \tvector{x}_{k}\|	=	\left(\frac{\Lambda}{\sigma \theta}\right)^{\frac{1-\theta}{\theta}} \frac{\Lambda}{\kappa} (r_k - r_{k+1})	\,,	\]
that is,
\begin{equation}	\label{eqn:Adot3}
	r_{k+1}	\le	r_k - \nu r_{k}^{\frac{1-\theta}{\theta}}	\tag{A.3}
\end{equation}
with \(\nu = \left(\frac{\Lambda}{\sigma \theta}\right)^{\frac{\theta-1}{\theta}} \frac{\kappa}{\Lambda}\).
Now, if \(\theta = 1/2\), we get from \eqref{eqn:Adot3} that \(\nu \in (0,1)\), and
	\[	r_k	\le	r_{k_0} (1 - \nu)^{k - k_0}	\]
for \(k \ge k_0\).
The case \(0 < \theta < 1/2\) is more delicate.
We follow Levitt: put \(p = \frac{\theta}{1-2\theta}, C \ge \max\{(\frac{\nu}{p})^{-p}, r_{k_0} k_0^{-p}\}\), and \(s_k = C k^{-p}\); then \(s_{k_0} \ge r_{k_0}\), and
\begin{equation*}
	s_{k+1}
		=	s_{k}(1 + k^{-1})^{-p}
		\ge	s_k (1 - p k^{-1})
		=	s_k - \frac{p}{C^{1/p}} s_k^{\frac{p + 1}{p}}
		\ge	
			s_k - \nu s_k^{\frac{1 - \theta}{\theta}}
\end{equation*}
(the first inequality holding by convexity of \(x^{-p}\)).
Using induction, it now follows from \eqref{eqn:Adot3} that \(r_k \le s_k\) for~all \(k \ge k_0\), which finishes the proof.
\end{quotation}
\end{proof}

\end{document}